\documentclass[a4paper]{article}
\usepackage{amsmath,mathrsfs,amsfonts,amsthm,amssymb,latexsym}
\usepackage{enumerate}
\usepackage{color}
\usepackage{hyperref}

\newtheorem{theorem}{Theorem}[section]
\newtheorem{lemma}[theorem]{Lemma}
\newtheorem{corollary}[theorem]{Corollary}
\theoremstyle{definition}
\newtheorem{definition}[theorem]{Definition}
\newtheorem{proposition}[theorem]{Proposition}

\newtheorem{example}[theorem]{Example}
\newtheorem{remark}[theorem]{Remark}

\DeclareMathOperator{\SQS}{SQS}
\DeclareMathOperator{\AG}{AG}
\DeclareMathOperator{\GL}{GL}
\DeclareMathOperator{\AGL}{AGL}
\DeclareMathOperator{\PSL}{PSL}
\DeclareMathOperator{\Stab}{Stab}

\newcommand{\bbox}[1]{\fbox{$\boldsymbol{#1}$}}
\newcommand{\vct}[1]{\boldsymbol{#1}}

\newcommand{\gf}[1]{\mathbb{F}_{#1}}

\newcommand{\bbZ}[0]{\mathbb{Z}}

\newcommand{\calB}[0]{\mathcal{B}}
\newcommand{\calC}[0]{\mathcal{C}}
\newcommand{\calD}[0]{\mathcal{D}}

\newcommand{\calP}[0]{\mathcal{P}}
\newcommand{\calQ}[0]{\mathcal{Q}}

\newcommand{\scrB}[0]{\mathscr{B}}
\newcommand{\scrC}[0]{\mathscr{C}}
\newcommand{\scrO}[0]{\mathscr{O}}

\newcommand{\abs}[1]{\left|{#1}\right|}
\newcommand{\gen}[1]{\left\langle {#1} \right\rangle}

\title{Completely (Quasi-)Uniform Nested Boolean Steiner Quadruple Systems}

\author{{\sc Xiao-Nan Lu}\footnote{Department of Electrical, Electronics and Computer Engineering, Gifu University, 1-1 Yanagido, Gifu, 501-1193, Japan. Email: \texttt{lu.xiaonan.b3@f.gifu-u.ac.jp}.}}
\date{} 

\begin{document}
\maketitle

\begin{abstract}
Nested Steiner quadruple systems are designs derived from Steiner quadruple systems (SQSs)  by partitioning each block into pairs. 
A nested SQS is completely uniform if every possible pair appears with equal multiplicity, and completely quasi-uniform if every pair appears with multiplicities that differ by at most one.
An explicit construction on the Boolean SQS of order $2^m$ is presented, producing a nested SQS$(2^m)$ that is completely uniform when $m$ is odd and completely quasi-uniform when $m$ is even for each integer $m \ge 3$.
These results resolve two open problems posed by Chee et al.~(2025). 
The notion of completely uniform pairings is further generalized for $t$-designs with $t \ge 2$. 
As an application, completely uniform nested $2$-$(2^m,4,3)$ designs give rise to fractional repetition codes with zero skip cost, requiring fewer storage nodes than constructions based on SQSs.
In addition, small examples are provided for non-Boolean orders, establishing the existence of completely uniform nested SQS$(v)$ for all $v \le 50$.
\end{abstract} 

\textbf{Keywords:} 
nested Steiner quadruple system, Boolean Steiner quadruple system, rotational Steiner quadruple system, block partition, fractional repetition code. 

\textbf{MSC (2020):} 05B05; 05B25; 94B60

\section{Introduction}

A $t$-$(v,k,\lambda)$ design is a pair $(V, \calB)$, where $V$ is a set of $v$ points and $\calB$ is a collection of $k$-subsets of $V$, called blocks, such that every $t$-subset of $V$ is contained in exactly $\lambda$ blocks in $\calB$. 
A $3$-$(v, 4, 1)$ design is called a \emph{Steiner quadruple system} (SQS) of order $v$, denoted by $\SQS(v)$. Hanani~\cite{Hanani1960} proved in 1960 that an $\SQS(v)$ exists if and only if $v \equiv 2$ or $4 \pmod{6}$. 
Since then, considerable attention has been devoted to constructing SQSs with prescribed symmetry and resolvability.
In particular, the study of SQSs admitting cyclic groups, dihedral groups, and more generally abelian groups acting regularly on the points was pioneered by K\"ohler~\cite{Kohler1978} and Piotrowski~\cite{Piotrowski1985}.
Recently, Ji and Lu~\cite{JiLu2021} showed that the existence problem for symmetric abelian group-invariant SQSs reduces to the case of symmetric cyclic $\SQS(2p)$, where $p$ is an admissible prime. However, the existence of such cyclic SQSs remains unresolved in general (see \cite{JiLu2021} and the references therein).
At the opposite extreme of symmetry, Huber~\cite{Huber2001} classified all flag-transitive SQSs into three classes, one of which contains the Boolean SQSs of order $2^m$ with $m \geq 3$, the main object of this work.
Resolvability is another central theme. 
Hartman~\cite{hartman1987existence} proved that a resolvable $\SQS(v)$ exists for all admissible orders $v \equiv 4,8 \pmod{12}$ except for 23 possible exceptions, and the remaining cases were finally established by Ji and Zhu~\cite{JiZhu2005}.  Nevertheless, many fundamental problems concerning SQSs, and more generally Steiner $3$-designs, remain largely open. 
The relevant definitions and necessary background will be provided in Section~\ref{sect:preliminaries}. 

Motivated by applications to fractional repetition (FR) codes with zero skip cost~\cite{chee2024repairing}, the concept of \emph{nested Steiner quadruple systems} (nested SQSs) was recently introduced by Chee et al.~\cite{chee2024pairs}. The definitions and examples related to FR codes will be presented in Section~\ref{sect:applications}. 
A \emph{nested SQS} is derived from an SQS by partitioning each block of the SQS into two pairs.  
A formal definition can be given as follows: 

\begin{definition}[see \cite{chee2024pairs}]
Let $V$ be a set of $v$ points. 
A \emph{nested Steiner quadruple system} (nested SQS) is a pair $(V, \scrB)$, where $\scrB$ is a subset of 
\[
\binom{\binom{V}{2}}{2} := \left\{ \big\{\{x, y\}, \{z, w\} \big\} :  \{x, y\}, \{z, w\} \in \binom{V}{2} \right\},
\] 
such that $(V, \calB)$ is an SQS with 
$\calB = \big\{ \{x, y, z, w\} :  \big\{\{x, y\}, \{z, w\} \big\} \in \scrB \big\}$.
\end{definition}

Given a block $B = \{x,y,z,w\}\in\calB$ of an SQS, a partition of $B$ into two pairs $\widetilde{B} = \big\{\{x, y\}, \{z, w\} \big\} \in \scrB$ is called a \emph{nested block}, and the pairs $\{x, y\}$ and $\{z, w\}$ are called \emph{nested pairs}.
In this paper, we simply write $\{x, y \mid z, w\}$ to denote the nested block $\big\{\{x, y\}, \{z, w\} \big\}$. 

The number of times a given pair appears in $\scrB$ is called its \emph{multiplicity}.
Bounds on multiplicities are extensively investigated in~\cite{chee2024pairs}, where several recursive constructions (extending the well-known doubling constructions for SQSs) are also provided.

A nested SQS is said to be \emph{uniform} if all nested pairs occur with the same multiplicity. 
Note that, in a uniform nested SQS, it is not necessarily the case that every pair in $\binom{V}{2}$ appears as a nested pair. 
A uniform nested SQS is called \emph{completely uniform} if, in addition, all pairs in $\binom{V}{2}$ appear as nested pairs. 
It is known that a completely uniform nested $\SQS(v)$ can exist only when $v \equiv 2 \pmod{6}$ (see Theorem~\ref{thm:all_uniform}).  
An example of a completely uniform nested $\SQS(8)$ is given in Example~\ref{ex:8}.

Because of the restrictive necessary condition for completely uniform nested SQSs, a relaxation is considered. 
A \emph{quasi-uniform} nested $\SQS(v)$ is defined to be a non-uniform nested $\SQS(v)$ in which the difference between the multiplicities of any two nested pairs is at most one. 
A quasi-uniform nested $\SQS(v)$ is said to be \emph{completely quasi-uniform} if, moreover, every pair in $\binom{V}{2}$ appears as a nested pair. 
A necessary condition for the existence of completely quasi-uniform nested SQSs is established in Theorem~\ref{thm:all_quasi_uniform}.  
An example of a completely quasi-uniform nested $\SQS(16)$ is given in Example~\ref{ex:16}.

Chee et al.~\cite{chee2024pairs} posed several open questions on nested SQSs, among which the following two are of particular importance (rephrased Problems 1 and 3 in \cite[Section 5]{chee2024pairs}):

\begin{itemize}
\item Construct a completely uniform nested $\SQS(2^m)$ for odd integers $m \ge 3$.
\item Find an infinite family of quasi-uniform nested SQSs.
\end{itemize}

This paper gives explicit constructions that resolve both problems. 
The main result (Theorem~\ref{thm:main_boolean}) shows that, for every integer $m\ge 3$, a nested $\SQS(2^m)$ can be constructed from the Boolean $\SQS(2^m)$, and it is completely uniform when $m$ is odd and completely quasi-uniform when $m$ is even.

Beyond resolving these problems, the paper concentrates on Boolean SQSs for the following reasons.

From the viewpoint of design theory, direct constructions for SQSs (and, more generally, for $t$-designs with $t \ge 3$) are rather limited.
Among known families, Boolean SQSs are classical and have rich algebraic and finite-geometric structure, so they provide a natural entry point to completely uniform pairings, for which no direct algebraic or finite-geometric constructions were previously known. 
This study can be regarded as a first step in this direction and may help address other open problems raised in Chee et al.~\cite{chee2024pairs}.

From the viewpoint of applications, completely uniform nested SQSs are relevant to FR codes for distributed storage.
In this context, designs whose point set is a finite field of characteristic $2$, such as Boolean SQSs and their subdesigns, are especially natural, since their Boolean structure aligns with the binary representation underlying conventional computer storage. 
Thus, the Boolean cases are not only mathematically interesting but also practically important. 

The remainder of the paper is organized as follows. 
Section~\ref{sect:preliminaries} presents the necessary preliminaries 
and introduces an extension of the notion of completely uniform pairing to general $t$-designs, together with a brief summary of its relations to some known concepts in the literature. 
Section~\ref{sect:bool_construction} establishes that for every $m \ge 3$ there exist $2$-$(2^m,4,3)$ designs admitting completely uniform pairing, and this construction leads to the main theorem on nested SQSs. Section~\ref{sect:applications} shows that these nested $2$-designs yield FR codes with zero skip cost while requiring fewer storage nodes than SQS-based schemes.  Further small non-Boolean examples are given in Section~\ref{sect:more_res}, and concluding remarks are presented in Section~\ref{sect:conclude}.

\section{Preliminaries}
\label{sect:preliminaries}

\subsection{Basic concepts and notation}

For clarity, some necessary concepts from design theory are reviewed. 
A general overview of design theory is given in~\cite{colbourn2006handbook}, 
and further details on Steiner quadruple systems (SQSs) can be found in~\cite{hartman1992steiner,lindner1978steiner}.

Let $(V, \calB)$ be a $t$-$(v, k, \lambda)$ design. 
If there exists a permutation $\sigma$ on $V$ of order $v$ preserving the block set $\calB$, then $(V, \calB)$ is said to be \emph{cyclic}.
 It is convenient to use $\bbZ_v$ as the point set of a cyclic design, where $\bbZ_{v}$ denotes (the cyclic group of) the ring of integers modulo $v$. 
Similarly, the design $(V, \calB)$ is said to be \emph{$1$-rotational}, or simply \emph{rotational}, if it admits an automorphism $\sigma$ that fixes one point (say $\infty$) and acts regularly (i.e., sharply transitively) on the remaining $v-1$ points.
For rotational designs, it is natural to identify $V$ with $\bbZ_{v-1} \cup \{\infty\}$, where $x + \infty$ is defined to be $\infty$ for any $x \in \bbZ_{v-1}$.

A design $(V, \calB)$ is said to be \emph{resolvable} if there exists a partition of $\calB$ into \emph{parallel classes}, each of which is a partition of $V$. The partition of parallel classes is called a \emph{resolution}. 

In the following discussion, the point set $V$ is usually taken to be $\gf{q}$, $\bbZ_n$, or $\bbZ_n \cup \{\infty\}$, where 
$\gf{q}$ denotes the finite field of order $q$, and 
$\bbZ_n$ denotes the ring of integers modulo $n$.
In these cases, the arithmetic operations are defined with respect to the underlying algebraic structure.  
Moreover, for a block $B=\{x,y,z,w\}$ or a nested block $\widetilde{B}=\{x,y \mid z,w\}$,  
the following convenient notation  is used: 
\begin{align*}
a \cdot B + t &:= \{ ax+t,\, ay+t,\, az+t, aw+t \,\}, \\
a \cdot \widetilde{B} + t &:= \{ ax+t,\, ay+t \mid az+t, aw+t \,\}.
\end{align*}

\subsection{Boolean SQSs}
\label{sect:Prelim_Boolean_SQS}

Let $\gf{2}$ denote the finite field of order $2$, and $\gf{2}^m$ denotes the $m$-dimensional vector space over $\gf{2}$. 
The pair $(\gf{2}^m, \calB)$ is called a \emph{Boolean SQS} of order $2^m$, where 
\[
\calB = \big\{ \{ x, y, z, x+y+ z \} : 
x, y, z \in \gf{2}^m \big\}.
\]
Let $\gf{2^m}$ denote the finite field of order $2^m$ with primitive element $\alpha$. 
The space $\gf{2}^m$ can be identified with $\gf{2^m}$ by fixing a vector space isomorphism over $\gf{2}$.
The following proposition gives a convenient description for analyzing the Boolean $\SQS(2^m)$.
Since the statement follows directly from the definition, the proof is omitted.

\begin{proposition}\label{prop:B0B1Q0Q1}
The block set $\calB$ of the Boolean $\SQS(2^m)$ on the point set $\gf{2^m}$ can be partitioned into two parts as follows: 
\begin{align*}
\calB_{0} &:=
\big\{ \{ 0, \alpha^i, \alpha^j,\alpha^i+ \alpha^j \} : 
 0 \le i <  j < 2^m-1 \big\}, \\ 
\calB_{1} &:=
\big\{ \{ \alpha^i, \alpha^j, \alpha^k,\alpha^i+ \alpha^j+ \alpha^k \} : 
  0 \le  i < j < k< 2^m-1 \big\}. 
\end{align*}
Let $V = \bbZ_{2^m-1} \cup \{ \infty \}$ and define
\begin{align*}
\calQ_0 &:=
\big\{ \{ \infty, i, j, l \} : i, j, l \in \bbZ_{2^m-1}, \alpha^i+ \alpha^j = \alpha^l \big\}, \\
\calQ_1&:=
\big\{ \{ i, j, k,l \} : i, j, k, l \in \bbZ_{2^m-1},  \alpha^i+ \alpha^j+ \alpha^k = \alpha^l \big\}.
\end{align*}
Then, the design $(V, \calQ_0 \cup \calQ_1)$, which is rotational, is isomorphic to 
$(\gf{2^m}, \calB_0  \cup \calB_1)$ 
via the natural correspondence induced by the discrete logarithm base $\alpha$. 
\end{proposition}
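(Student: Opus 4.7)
The plan is to verify the partition $\calB = \calB_0 \cup \calB_1$ directly from the definition of the Boolean SQS and then transfer it to $(V, \calQ_0 \cup \calQ_1)$ via the discrete logarithm. First I would recall that the Boolean blocks have the form $\{x,y,z,x+y+z\}$ with $x,y,z \in \gf{2^m}$ pairwise distinct (the fourth coordinate is automatically distinct from the other three in characteristic $2$, since, e.g., $x+y+z=x$ forces $y=z$). A block contains $0$ iff either one of $x,y,z$ equals $0$ or $x+y+z=0$; in either case, relabeling shows the block may be written as $\{0,\alpha^i,\alpha^j,\alpha^i+\alpha^j\}$ with $0 \le i < j < 2^m-1$, which is exactly the form of $\calB_0$. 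The remaining blocks have all four coordinates in $\gf{2^m}^{\times} = \{\alpha^i : 0 \le i < 2^m-1\}$ and hence may be written as $\{\alpha^i,\alpha^j,\alpha^k,\alpha^i+\alpha^j+\alpha^k\}$ with $i<j<k$, giving $\calB_1$.

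Next, I would define the bijection $\phi:\gf{2^m} \to V$ by $\phi(0)=\infty$ and $\phi(\alpha^i)=i$. Applying $\phi$ to a block $\{0,\alpha^i,\alpha^j,\alpha^i+\alpha^j\} \in \calB_0$ produces $\{\infty,i,j,l\}$, where $l$ is the unique index with $\alpha^l=\alpha^i+\alpha^j$; this is precisely the defining condition of $\calQ_0$. Similarly, $\phi$ sends each block of $\calB_1$ to a quadruple of the form $\{i,j,k,l\}$ with $\alpha^i+\alpha^j+\alpha^k=\alpha^l$, matching the defining condition of $\calQ_1$. The map $\phi$ is a bijection of point sets and induces a bijection between the block sets, so it is an isomorphism of designs.

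For the rotational property of $(V,\calQ_0\cup\calQ_1)$, I would exhibit the automorphism $\sigma$ defined by $\sigma(\infty)=\infty$ and $\sigma(i) = i+1 \pmod{2^m-1}$. The single identity that drives everything is that multiplication by $\alpha$ is $\gf{2}$-linear on $\gf{2^m}$, so $\alpha^i+\alpha^j=\alpha^l$ implies $\alpha^{i+1}+\alpha^{j+1}=\alpha^{l+1}$, and likewise for the three-term relation defining $\calQ_1$. Hence $\sigma$ preserves both $\calQ_0$ and $\calQ_1$; since it fixes $\infty$ and acts regularly on $\bbZ_{2^m-1}$, it witnesses the rotational structure. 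I do not expect a genuine obstacle: the only delicate bookkeeping is to confirm that the index orderings used in the descriptions of $\calB_0,\calB_1,\calQ_0,\calQ_1$ produce neither repetitions nor omissions, which is immediate from the uniqueness of the discrete logarithm and the set-theoretic nature of the blocks.
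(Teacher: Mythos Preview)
Your proposal is correct and matches the paper's approach: the paper states that the proposition ``follows directly from the definition'' and omits the proof entirely, and your argument is precisely the routine verification one would write out---partition according to whether $0$ lies in the block, transport via the discrete-logarithm bijection $\phi$, and check rotationality using the $\gf{2}$-linearity of multiplication by $\alpha$.
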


The Boolean SQS $(\gf{2^m}, \calB)$ can be also regarded as $\AG_2(m, 2)$, the point-plane incidence structure of the $m$-dimensional affine geometry over $\gf{2}$, hence it is obviously resolvable. 
The construction of parallel classes via translation of planes in $\AG_2(m, 2)$ is formally described in the following proposition.
The proof is straightforward and hence omitted. 

\begin{proposition}\label{prop:bsqs_pc}
Given a block $B=\{ x, y, z, x + y + z \}$ in $(\gf{2^m}, \calB)$, let 
\[
\calP(B) = \big\{ B+t : t \in \gf{2^m} \big\}.
\]
Then $\calP(B)$ forms a parallel class, which consists of $2^{m-2}$ blocks. 
Moreover, the family of all parallel classes $\big\{ \calP(B) : B \in \calB\big\}$ forms a resolution of $(\gf{2^m}, \calB)$ consisting of $(2^m-1)(2^{m-1}-1)/3$ parallel classes.
\end{proposition}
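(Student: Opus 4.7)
The plan is to verify the two assertions in turn. For the first, I would show that $\calP(B)$ is a parallel class of size $2^{m-2}$ by identifying $B$ with an affine $2$-flat of $\gf{2^m}$; for the second, I would use the translation action of $\gf{2^m}$ on $\calB$ as an equivalence relation and obtain the count by division.

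The starting observation is that translation preserves blocks: in characteristic $2$, $(x+t)+(y+t)+(z+t) = x+y+z+t$, so $B+t = \{x+t,\,y+t,\,z+t,\,(x+y+z)+t\}$ is again of the form defining $\calB$. Next, writing $W = B - x = \{0,\,x+y,\,x+z,\,y+z\}$, a direct check (e.g.\ $(x+y)+(x+z) = y+z \in W$) shows that $W$ is closed under addition, and the elements are distinct since the four points of $B$ are distinct. Hence $W$ is a $2$-dimensional $\gf{2}$-subspace of $\gf{2^m}$, and $B = x+W$ is an affine $2$-flat. Consequently, as $t$ ranges over $\gf{2^m}$, the translates $B+t$ are exactly the cosets of $W$; there are $[\gf{2^m}:W] = 2^{m-2}$ such cosets, and they form a partition of $\gf{2^m}$. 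This proves that $\calP(B)$ is a parallel class of size $2^{m-2}$.

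For the second assertion, the translation action of the additive group of $\gf{2^m}$ on $\calB$ induces an equivalence relation whose orbits are precisely the sets $\calP(B)$; hence $\{\calP(B):B\in\calB\}$ is a partition of $\calB$ into parallel classes. Dividing by the size of each class yields
\[
\frac{|\calB|}{|\calP(B)|} \;=\; \frac{2^m(2^m-1)(2^m-2)/24}{2^{m-2}} \;=\; \frac{(2^m-1)(2^{m-1}-1)}{3},
\]
which matches the claimed count.

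No serious obstacle arises. The only non-cosmetic point is verifying that the translation-stabilizer of $B$ has order exactly $4$, so that $|\calP(B)| = 2^m/4 = 2^{m-2}$; this is immediate from the identification $B = x+W$, since the coset $x+W$ is fixed by translation by $s$ if and only if $s\in W$. This is in accord with the author's remark that the proof is straightforward.
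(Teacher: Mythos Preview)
Your proof is correct and is exactly the straightforward affine-geometry argument the paper has in mind: the text immediately preceding the proposition identifies the Boolean $\SQS(2^m)$ with the point--plane system $\AG_2(m,2)$ and states that the proof is omitted, and your identification of $B$ as a coset of a $2$-dimensional $\gf{2}$-subspace is precisely this viewpoint. The counting of parallel classes by dividing $|\calB|$ by $2^{m-2}$ is also correct and matches the intended argument.
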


\subsection{Necessary conditions for the existence of completely (quasi-)uniform nested SQSs}
\label{sect:comp_quasi_uniform}

Recall that a nested SQS is said to be \emph{uniform} if all nested pairs occur with the same multiplicity. 
It is called \emph{quasi-uniform} if the multiplicities of any two nested pairs differ by at most one, while not all nested pairs have the same multiplicity. 
Furthermore, a (quasi-)uniform nested SQS is said to be \emph{completely} (quasi-)uniform if every pair in $\binom{V}{2}$ appears as a nested pair.

\begin{example}\label{ex:8}
Let $V = \bbZ_{7} \cup \{\infty\}$ and let
\[
\widetilde{B}_0 = \{ \infty, 0 \mid 1, 3\}, \quad
\widetilde{B}_1 = \{ 2, 6 \mid 4, 5\}.
\]
Define 
\[
\scrB = \bigcup_{i=0}^{1} \left\{ \widetilde{B}_i + t : t \in \bbZ_7 \right\}.
\]
Then $(V, \scrB)$ is a completely uniform nested $\SQS(8)$ in which each pair occurs exactly once.
This nested $\SQS(8)$ is obtained from a rotational SQS, which is isomorphic to the Boolean $\SQS(8)$.
\end{example}

\begin{example}\label{ex:16}
Let $V = \bbZ_{15} \cup \{\infty\}$ and define eight base nested blocks $\widetilde{B}_0, \widetilde{B}_1, \ldots, \widetilde{B}_7$ as follows:
\begin{align*}
&\{\infty, 0 \mid 1, 4 \},  &
&\{ 2, 8 \mid 5, 10 \},  &
&\{ 3, 14 \mid 9, 7 \},  &
&\{ 6, 13 \mid 11, 12 \}, \\
&\{\infty, 0 \mid 2, 8 \}, &  
&\{ 1, 4 \mid 5, 10 \},  &
&\{ 3, 14 \mid 6, 13 \},  &
&\{ 7, 9 \mid 12, 11 \}.
\end{align*}
Define
\[
\scrB = \bigcup_{i=0}^{7} \left\{\widetilde{B}_i + t : t \in \bbZ_{15}\right\}.
\]
Moreover, define four additional base nested blocks $\widetilde{C}_0, \widetilde{C}_1, \widetilde{C}_2, \widetilde{C}_3$ as follows:
\begin{align*}
&\{\infty, 0 \mid 5, 10 \}, &
&\{ 1, 4 \mid 2, 8 \}, &
&\{ 3, 14 \mid 11, 12 \}, &
&\{ 6, 13 \mid 9, 7 \}.
\end{align*}
Let
\[
\scrC = \bigcup_{i=0}^{3} \left\{\widetilde{C}_i + t : t \in \{0,1,2,3,4\}\right\}.
\]
Then, $(V,\scrB \cup \scrC)$ is a completely quasi-uniform nested $\SQS(16)$.
Every pair in $\binom{V}{2}$ occurs exactly twice in $\scrB$, and $\scrC$ contributes $40$ additional distinct pairs once. Hence, in $\scrB \cup \scrC$ precisely $40$ pairs have multiplicity $3$ and the remaining $80$ pairs have multiplicity $2$, covering all $\binom{16}{2}=120$ pairs.
This $\SQS(16)$ is obtained from a rotational SQS that is isomorphic to the Boolean $\SQS(16)$.
\end{example}

A necessary condition for the existence of a completely uniform nested SQS is given by the following theorem.

\begin{theorem}[{\cite[Theorem 4.1]{chee2024pairs}}]
\label{thm:all_uniform}
If a completely uniform nested $\SQS(v)$ exists, then $v \equiv 2 \pmod{6}$ and the multiplicity of each pair is $\frac{v-2}{6}$.
\end{theorem}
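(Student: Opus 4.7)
The plan is to derive the multiplicity $\mu$ by a straightforward double-counting argument on pairs appearing in nested blocks, and then observe that integrality of $\mu$ forces $v \equiv 2 \pmod 6$.

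First I would recall that an $\SQS(v)$ has exactly $|\calB| = \binom{v}{3}/\binom{4}{3} = v(v-1)(v-2)/24$ blocks, since every $3$-subset lies in a unique block. Each block of the underlying SQS, being a $4$-set, is partitioned into exactly two pairs in the nested structure, so the total pair-multiplicity sum over all of $\binom{V}{2}$ is
\[
\sum_{P \in \binom{V}{2}} \mathrm{mult}(P) \;=\; 2 \, |\calB| \;=\; \frac{v(v-1)(v-2)}{12}.
\]

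Next, I would invoke complete uniformity: every one of the $\binom{v}{2} = v(v-1)/2$ pairs of $V$ occurs as a nested pair with the same multiplicity $\mu$. Equating the two expressions for the total gives
\[
\mu \cdot \frac{v(v-1)}{2} \;=\; \frac{v(v-1)(v-2)}{12},
\]
which simplifies immediately to $\mu = (v-2)/6$.

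Finally, since $\mu$ must be a positive integer, $v - 2$ must be divisible by $6$, i.e.\ $v \equiv 2 \pmod 6$. This divisibility condition is strictly stronger than Hanani's necessary condition $v \equiv 2$ or $4 \pmod 6$ for the existence of an $\SQS(v)$, and it is automatically consistent with it. I expect no real obstacle here: the argument is pure double counting, and the only thing worth mentioning is that completeness (every pair of $V$ actually appearing) is essential — without it the sum on the left would range over a proper subset of $\binom{V}{2}$ and the divisibility conclusion would fail.
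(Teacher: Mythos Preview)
Your proof is correct. The paper does not supply its own proof of this statement (it is quoted from \cite{chee2024pairs}), but your double-counting argument is exactly the specialization to $n_1=0$ of the computation the paper carries out for the companion quasi-uniform result in Theorem~\ref{thm:all_quasi_uniform}, so the approaches coincide.
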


While the concept of quasi-uniform nested SQSs was introduced in~\cite{chee2024pairs}, their constructions and further properties were not extensively examined.
As an analog of Theorem~\ref{thm:all_uniform}, 
we derive below a necessary condition for the existence of a completely quasi-uniform nested SQS.

\begin{theorem}\label{thm:all_quasi_uniform}
If a completely quasi-uniform nested $\SQS(v)$ exists, then $v \equiv 4 \pmod{6}$ and the multiplicity of each pair is either $\frac{v-4}{6}$ or $\frac{v+2}{6}$.
More precisely, in a completely quasi-uniform nested $\SQS(v)$, there are 
$\frac{v(v-1)}{3}$ nested pairs with multiplicity $\frac{v-4}{6}$
and $\frac{v(v-1)}{6}$ nested pairs with multiplicity $\frac{v+2}{6}$. 
\end{theorem}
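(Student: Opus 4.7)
The plan is to run a double-counting argument on nested pairs, combined with the divisibility constraints coming from the existence of SQS$(v)$, and then solve a small linear system for the two possible multiplicity counts.

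First I would count the blocks of the underlying SQS$(v)$: the standard formula gives $|\calB| = \binom{v}{3}/\binom{4}{3} = v(v-1)(v-2)/24$. Each block is partitioned into exactly two pairs, so $|\scrB|$ as a multiset of pairs equals $2|\calB| = v(v-1)(v-2)/12$. Since the nested SQS is completely quasi-uniform, every one of the $\binom{v}{2} = v(v-1)/2$ unordered pairs appears at least once, so the average multiplicity is
\[
\mu_{\mathrm{avg}} = \frac{v(v-1)(v-2)/12}{v(v-1)/2} = \frac{v-2}{6}.
\]
Because the nested SQS is quasi-uniform but not uniform, the set of multiplicities takes exactly two consecutive integer values, and its average $\mu_{\mathrm{avg}}$ cannot itself be an integer. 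Thus $(v-2)/6 \notin \bbZ$. Combined with Hanani's necessary condition $v \equiv 2$ or $4 \pmod{6}$, this forces $v \equiv 4 \pmod{6}$, and the two possible multiplicities are $\lfloor (v-2)/6 \rfloor = (v-4)/6$ and $\lceil (v-2)/6 \rceil = (v+2)/6$.

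For the exact counts, let $a$ and $b$ be the numbers of pairs attaining multiplicity $(v-4)/6$ and $(v+2)/6$, respectively. The two identities
\[
a + b = \frac{v(v-1)}{2}, \qquad a\cdot\frac{v-4}{6} + b\cdot\frac{v+2}{6} = \frac{v(v-1)(v-2)}{12}
\]
form a $2\times 2$ linear system whose solution is $b = v(v-1)/6$ and $a = v(v-1)/3$, matching the statement. I expect no serious obstacle: everything is elementary counting. The only subtle point is ensuring that the conclusion ``$(v-2)/6$ is not an integer'' is legitimate, i.e.\ that the definition of completely quasi-uniform indeed excludes the uniform case; this is guaranteed by the explicit clarification in the paragraph preceding the theorem, so no further argument is needed there.
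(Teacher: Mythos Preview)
Your proposal is correct and follows essentially the same double-counting argument as the paper: both set up the two linear equations $n_0+n_1=\binom{v}{2}$ and $\mu n_0+(\mu+1)n_1=\frac{v(v-1)(v-2)}{12}$ and use the non-uniformity to force $(v-2)/6$ to be a non-integer, hence $v\equiv 4\pmod 6$. The only cosmetic difference is that you phrase the key step via the average multiplicity, whereas the paper eliminates $n_0$ and bounds $\mu$ directly; the content is the same.
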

\begin{proof}
Suppose there are $n_0$ nested pairs with multiplicity $\mu$ and $n_1$ nested pairs with multiplicity $\mu+1$.  
Then the following equations hold:
\begin{align}
n_0 + n_1 &= \binom{v}{2}, \label{eq:all_quasi_uniform_1} \\
\mu n_0 + (\mu + 1) n_1 &= \frac{v(v-1)(v-2)}{12}. \label{eq:all_quasi_uniform_2}
\end{align}
Eliminating $n_0$ and solving for $\mu$ gives
$\mu= \frac{v-2}{6} - {n_1}/{\binom{v}{2}}$. 
Since $0 < n_1 < \binom{v}{2}$, it follows that
$\frac{v-2}{6} - 1 < \mu < \frac{v-2}{6}$.
No integer $\mu$ satisfies this inequality when $v \equiv 2 \pmod{6}$.  
Since an $\SQS(v)$ exists if and only if $v \equiv 2,4 \pmod{6}$, it must be the case that $v \equiv 4 \pmod{6}$, and in this case $\mu = \tfrac{v-4}{6}$.  
Finally, solving \eqref{eq:all_quasi_uniform_1} and \eqref{eq:all_quasi_uniform_2} for $n_0$ and $n_1$ yields
$n_0 = \frac{v(v-1)}{3}$ and $n_1 = \frac{v(v-1)}{6}$. 
\end{proof}

\subsection{Generalization of completely  (quasi-)uniform nested designs}
\label{sect:gen_design}

The notions of completely (quasi-)uniform nested $\SQS$s can be generalized for $t$-$(v, k, \lambda)$ designs for any $t \ge 2$, by partitioning the elements of each block of size $k$ into subsets of size $l$, where $l$ divides $k$.

\begin{definition}\label{def:t-pairing}
Let $t \ge 2$, $v \ge k > l \ge 2$ be integers such that $l$ divides $k$. 
An \emph{$l$-nested $t$-$(v, k, \lambda)$ design} is a pair $(V, \scrB)$, where $V$ is a set of $v$ points and
$\scrB$ is a subset of 
\[
\binom{\binom{V}{l}}{k/l} := \left\{ \big\{ S_1, S_2 \ldots, S_{k/l}\big\}\;  :  \; S_1, S_2 \ldots, S_{k/l} \in \binom{V}{l} \right\},
\] 
such that $(V, \calB)$ is a $t$-$(v, k, \lambda)$ design with 
\[
\calB = \left\{ \bigcup_{i=1}^{k/l} S_i \; : \;  \big\{ S_1, S_2 \ldots, S_{k/l}\big\} \in \scrB \right\}.
\]
An $l$-nested design is called \emph{uniform} if all nested subblocks $S_i$ have the same multiplicity.
It is called \emph{quasi-uniform} if it is not uniform but the multiplicities of nested subblocks differ by at most one.
A (quasi-)uniform $l$-nested design is said to be \emph{completely} (quasi-)uniform if every subset in $\binom{V}{l}$ appears as a nested subblock.
In particular, when $l=2$, a subblock is simply called a \emph{pair}, and a $2$-nested design is simply called a \emph{nested} design.
\end{definition}

There is a closely related notion called \emph{nested balanced incomplete block designs} (nested BIBDs), originating from statistical design of experiments and first formally introduced by Preece~\cite{Preece1967}.  

\begin{definition}[cf.~\cite{MorganPreeceRees2001,Preece1967}]\label{def:preece}
Let $(V, \calD_1)$ be a $2$-design (BIBD) with $v$ points, $b_1$ blocks, and block size $k_1$.  
Suppose that each block of $\calD_1$ can be partitioned into subblocks of size $k_2$, where $k_2 \ge 2$ is a divisor of $k_1$, and that the resulting $b_2 = b_1 k_1 / k_2$ subblocks themselves form a $2$-design $(V, \calD_2)$ with block size $k_2$.  
Then, the triple $(V, \calD_1, \calD_2)$ is called a \emph{nested BIBD} with parameters $(v, b_1, b_2, k_1, k_2)$. 
To distinguish this type of nested BIBD, it will be referred to as a \emph{Preece nested BIBD}.
\end{definition}

The notion of a Preece nested BIBD can be extended to a \emph{doubly nested BIBD}, in which blocks and subblocks are defined as before, but each subblock is further partitioned into subsubblocks that themselves form a BIBD.  
This idea extends naturally to \emph{multiply nested BIBDs}, where blocks are recursively partitioned into smaller BIBDs at successive levels.  For further details and related results, see~\cite{MorganPreeceRees2001}.

Note that a $t$-$(v, k, \lambda)$ design is also a $(t-1)$-$(v, k, \lambda_{t-1})$ design, where
$\lambda_{t-1} = \lambda \cdot \frac{v-t+1}{k-t+1}$.
Hence, for $t \ge 3$ and $l \ge 2$, 
a completely uniform $l$-nested $t$-$(v, k, \lambda)$ design having $b$ blocks in Definition~\ref{def:t-pairing} is precisely a Preece nested BIBD with parameters $(v, b, kb/l, k, l)$ in Definition~\ref{def:preece}. The collection of subblocks in this case forms a trivial $l$-design with $v$ points and block size $l$.

\begin{remark}
There are other notions in combinatorial design theory with similar terminology, such as \emph{balanced nested designs} or \emph{split-block designs}~\cite{chisaki2020combinatorial,fuji2002balanced,kuriki1994balanced,ozawa2002optimality}, where blocks are partitioned into subblocks under different balancedness conditions.  
The term \emph{nest} is also used in another sense~\cite{buratti2024nestings,stinson1985spectrum}, referring to the extension of one design to another by adding ``nested'' points to blocks.  
These concepts share only similar names but are essentially distinct from the notion considered in this paper.
\end{remark}

In this paper, we focus on the cases $t \in \{2,3\}$ with block size $k = 4$, and consider partitions of blocks into pairs, i.e., $l = 2$.
A completely uniform $2$-nested $2$-$(v,4,\lambda)$ design in our Definition~\ref{def:t-pairing} is equivalent to a Preece nested BIBD  with parameters $(v, b, 2b, 4, 2)$, where $b = \tfrac{\lambda v(v-1)}{12}$.  
In particular, since an $\SQS(v)$ is also a $2$-$(v,4,\tfrac{v-2}{2})$ design,  
 every completely uniform nested $\SQS(v)$ gives rise to a Preece nested BIBD with parameters $(v, b, 2b, r, 4, 2)$, where $b=\tfrac{v(v-1)(v-2)}{24}$. However, not every Preece nested BIBD with these parameters corresponds to a completely uniform nested $\SQS(v)$.

It is worth noting that the case $k=4$, $\lambda=3$, and $\ell=2$ is also closely related to the classical problem of whist tournaments, whose study dates back to Moore's study in 1896 \cite{Moore1896a,Moore1896b}.
More precisely, a whist tournament of order $v=4n$ is exactly a completely uniform resolvable $2$-nested $2$-$(v,4,3)$ design (see \cite{AndersonFinizio-Whist-2006,MorganPreeceRees2001} for details).

\begin{lemma}
For a completely uniform nested $2$-$(v,4,\lambda)$ design to exist, it is necessary that $\lambda \equiv 0 \pmod{3}$.
\end{lemma}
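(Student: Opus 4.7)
The plan is to carry out a straightforward double counting of nested pairs. First I would count the number of blocks $b$ in a $2$-$(v,4,\lambda)$ design using the standard formula $b = \lambda \binom{v}{2} / \binom{4}{2} = \lambda v(v-1)/12$. Since $k/l = 4/2 = 2$, each block is partitioned into exactly two nested pairs, so the total multiplicity summed over all pairs in $\binom{V}{2}$ equals $2b = \lambda v(v-1)/6$.

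Next I would use the completely uniform hypothesis: every pair in $\binom{V}{2}$ appears as a nested pair with the same multiplicity $\mu$, and there are $\binom{v}{2} = v(v-1)/2$ such pairs. Equating the two counts gives
\[
\mu \cdot \frac{v(v-1)}{2} = \frac{\lambda v(v-1)}{6},
\]
so $\mu = \lambda/3$. Since $\mu$ must be a positive integer, this forces $\lambda \equiv 0 \pmod{3}$.

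There is no real obstacle here; the statement is a divisibility consequence of a single double count, analogous in spirit to (but simpler than) the multiplicity argument used in Theorem~\ref{thm:all_uniform} and Theorem~\ref{thm:all_quasi_uniform}. The only mild subtlety is to observe that the complete uniformity assumption forces the common multiplicity to take the exact value $\lambda/3$, which also provides a useful byproduct for later sections (for instance, when specializing to $\lambda = 3$ one obtains $\mu = 1$, matching the nested $2$-$(2^m,4,3)$ designs mentioned in the introduction).
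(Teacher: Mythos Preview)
Your proof is correct and follows essentially the same approach as the paper: both count $2b = \lambda v(v-1)/6$ nested pairs and divide by $\binom{v}{2}$ to conclude $\lambda \equiv 0 \pmod{3}$. You make the common multiplicity $\mu = \lambda/3$ explicit, whereas the paper just notes the divisibility, but the argument is identical.
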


\begin{proof}
In a $2$-$(v,4,\lambda)$ design, the number of blocks is
$b = \lambda \cdot \frac{v(v-1)}{12}$.
Each block contributes exactly two nested pairs, so the total number of nested pairs in such a nested design is
$2b = \lambda \cdot \frac{v(v-1)}{6}$.
Complete uniformity requires that all $\binom{v}{2}$ pairs occur an equal number of times.
Hence, $2b$ must be divisible by $\frac{v(v-1)}{2}$, equivalently, $\lambda \equiv 0 \pmod{3}$.
\end{proof}

The following propositions provide sufficient conditions for the existence of completely (quasi-)uniform nested SQSs. They show that the construction problems for SQSs can be reduced to that of sub-$2$-designs. This reduction will be useful in the proof of the main theorem concerning Boolean SQSs.

\begin{proposition}\label{prop:split_2designs_2} 
Let $v \equiv 2 \pmod{6}$, and let $(V, \calB)$ be an $\SQS(v)$. 
Suppose that the block set $\calB$ can be partitioned into $\frac{v - 2}{6}$ disjoint subsets as
\[
\calB = \bigcup_{h=1}^{\frac{v - 2}{6}} \calB_h,
\]
where each $(V, \calB_h)$ is a $2$-$(v,4,3)$ design that admits a completely uniform nested pairing. Then the SQS $(V, \calB)$ also admits a completely uniform nested pairing.
\end{proposition}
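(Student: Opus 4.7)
The plan is to reduce the statement to a simple counting observation about how the nested pairings of the sub-$2$-designs $\calB_h$ combine. First I would determine the multiplicity with which each pair of $V$ must occur in the nested pairing of a single $\calB_h$. Since $\calB_h$ is a $2$-$(v,4,3)$ design, its number of blocks is $b_h = 3v(v-1)/12 = v(v-1)/4$, so the total number of (multiset) nested pairs contributed by $\calB_h$ is $2b_h = \binom{v}{2}$. Complete uniformity of the nested pairing of $\calB_h$ therefore forces each of the $\binom{v}{2}$ pairs in $\binom{V}{2}$ to occur as a nested pair in $\calB_h$ \emph{exactly once}.

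Next I would assemble the nested pairings of the $\calB_h$'s. For each $h \in \{1, 2, \ldots, (v-2)/6\}$, fix a completely uniform nested pairing $\scrB_h$ of $\calB_h$, guaranteed by hypothesis. Define
\[
\scrB = \bigcup_{h=1}^{(v-2)/6} \scrB_h.
\]
Since the sets $\calB_h$ partition $\calB$, each block of the SQS is equipped with exactly one pair-partition in $\scrB$, so $(V, \scrB)$ is a nested SQS whose underlying SQS is $(V, \calB)$.

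Finally, I would check complete uniformity: by the first step every pair of $V$ appears exactly once in each $\scrB_h$, and hence exactly $(v-2)/6$ times in $\scrB$. This is precisely the multiplicity required by Theorem~\ref{thm:all_uniform}, so $(V, \scrB)$ is a completely uniform nested $\SQS(v)$.

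There is essentially no difficult step here; the proposition is an assembly lemma and all the work lies in the one arithmetic observation that a completely uniform nested $2$-$(v,4,3)$ design automatically has every pair appearing exactly once as a nested pair. The only thing to be careful about is to note that the $\calB_h$ partition $\calB$ (so no block is pair-partitioned twice) and that the multiplicity arithmetic aligns with Theorem~\ref{thm:all_uniform}; both are immediate.
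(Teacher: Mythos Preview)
Your proposal is correct and follows exactly the same approach as the paper: take the union of the completely uniform nested pairings of the $\frac{v-2}{6}$ sub-$2$-designs and observe that each pair then occurs with multiplicity $\frac{v-2}{6}$, as in Theorem~\ref{thm:all_uniform}. Your write-up is in fact more detailed than the paper's one-sentence proof, since you explicitly verify that complete uniformity in a $2$-$(v,4,3)$ nested design forces every pair to occur exactly once.
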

\begin{proof}
The union of the nested blocks from the $\frac{v-2}{6}$ completely uniform nested $2$-$(v,4,3)$ designs forms a nested $\SQS(v)$ in which each nested pair occurs $\frac{v-2}{6}$ times, as required in Theorem~\ref{thm:all_uniform}.
\end{proof}

\begin{proposition}\label{prop:split_2designs_4}
Let $v \equiv 4 \pmod{6}$, and let $(V, \calB)$ be an $\SQS(v)$. 
Suppose that the block set $\calB$ can be partitioned as
\[
\calB = \calB_0 \cup \bigcup_{h=1}^{\frac{v - 4}{6}} \calB_h,
\]
where $(V, \calB_0)$ forms a $2$-$(v,4,1)$ design, and for each $1 \le h \le \frac{v - 4}{6}$, $(V, \calB_h)$ is a $2$-$(v,4,3)$ design that admits a completely uniform nested pairing.
Then the SQS $(V, \calB)$ admits a completely quasi-uniform nested pairing.
\end{proposition}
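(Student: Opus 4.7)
The plan is to build the nested pairing on $(V, \calB)$ block by block according to the given partition, and then verify that the resulting multiplicity distribution is exactly the one predicted by Theorem~\ref{thm:all_quasi_uniform}. Concretely, for each index $h \in \{1, \ldots, (v-4)/6\}$, I would fix a completely uniform nested pairing of $(V,\calB_h)$, which exists by hypothesis; by definition, each pair in $\binom{V}{2}$ appears exactly once among the nested pairs arising from $\calB_h$. Summing over $h$, every pair of points acquires a ``base'' multiplicity of exactly $(v-4)/6$ before the contribution of $\calB_0$ is accounted for.

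The key observation is that, because $(V,\calB_0)$ is a $2$-$(v,4,1)$ design, every pair of points lies in exactly one block of $\calB_0$. Consequently, any way of partitioning each block $\{a,b,c,d\}\in\calB_0$ into two pairs automatically yields nested pairs that are pairwise distinct, since a pair $\{x,y\}$ can only originate from the unique block of $\calB_0$ containing it. Thus, choosing an arbitrary partition for every block of $\calB_0$ produces $2|\calB_0| = v(v-1)/6$ distinct nested pairs, each contributing multiplicity one to the corresponding pair.

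Combining the two contributions, a pair $\{x,y\}$ has total multiplicity $(v-4)/6 + 1 = (v+2)/6$ if it is selected during the nesting of $\calB_0$, and multiplicity $(v-4)/6$ otherwise. The number of pairs attaining multiplicity $(v+2)/6$ is therefore exactly $v(v-1)/6$, while the remaining $\binom{v}{2} - v(v-1)/6 = v(v-1)/3$ pairs attain multiplicity $(v-4)/6$, matching Theorem~\ref{thm:all_quasi_uniform} on the nose. Since the two multiplicities differ by exactly one and every pair appears (thanks to the nestings of the $\calB_h$), the resulting nested SQS is completely quasi-uniform.

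There is no substantive obstacle here: the argument is essentially bookkeeping, and the only real insight is the remark that in a $2$-$(v,4,1)$ design the block structure forces any nesting to produce pairwise distinct nested pairs, so no careful selection of partitions inside $\calB_0$ is needed. The mild subtlety worth flagging in the write-up is that the quasi-uniform case genuinely requires the two multiplicities $(v\pm\cdot)/6$ to be distinct, which holds since $\calB_0$ is nonempty whenever $v \ge 4$, ensuring the construction is non-uniform as required by the definition of \emph{completely quasi-uniform}.
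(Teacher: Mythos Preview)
Your proposal is correct and follows essentially the same approach as the paper: take the given completely uniform nestings on the $\calB_h$, nest $\calB_0$ arbitrarily (noting that its nested pairs are automatically distinct because $(V,\calB_0)$ is a $2$-$(v,4,1)$ design), and combine to obtain multiplicities $\frac{v-4}{6}$ and $\frac{v+2}{6}$. The paper's own proof is terser but uses exactly this idea; your version spells out the bookkeeping and the non-uniformity check a bit more carefully.
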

\begin{proof}
As in Proposition~\ref{prop:split_2designs_2}, combining the nested blocks from the $\frac{v-4}{6}$ completely uniform nested $2$-$(v,4,3)$ designs with those from a nested $2$-$(v,4,1)$ design yields a nested $\SQS(v)$. 
Here, for the $2$-$(v,4,1)$ design, the blocks can be partitioned arbitrarily to form a nested $2$-$(v,4,1)$ design. This nested design is uniform but not completely uniform, since each pair appears only once and thus covers $\frac{v(v-1)}{6}$ nested pairs in total.  
The resulting nested $\SQS(v)$ has each nested pair occurring $\frac{v - 4}{6}$ or $\frac{v + 2}{6}$ times, as required in Theorem~\ref{thm:all_quasi_uniform}.
\end{proof}

\section{Construction for completely (quasi-)uniform nested $\SQS(2^m)$}
\label{sect:bool_construction}

Let $m \ge 3$ and consider the Boolean SQS $(\gf{2^m}, \calB)$ as defined in Section~\ref{sect:Prelim_Boolean_SQS}.

As shown in Proposition~\ref{prop:bsqs_pc}, the mapping $x \mapsto x+t$ over $\gf{2^m}$, corresponding to the action of the additive group of $\gf{2^m}$, yields parallel classes in $(\gf{2^m}, \calB)$.  
By further applying the action of the multiplicative group of $\gf{2^m}$ to these parallel classes, one obtains a $2$-$(2^m,4,\lambda)$ subdesign of $(\gf{2^m}, \calB)$ with $\lambda \in \{1,3\}$, as stated in the following lemma.  
Here, the mapping $x \mapsto \alpha^k \cdot x$ over $\gf{2^m}$ can be interpreted as $i \mapsto k+i$ on $\bbZ_{2^m-1} \cup \{\infty\}$ (with $\infty \mapsto \infty$), that is, as a cyclic translation on exponents.

\begin{lemma}\label{lem:bool_subdesign}
Let $(\gf{2^m}, \calB)$ be the Boolean $\SQS(2^m)$.
Define
\[
\calB^{(j)} := \left\{ \alpha^k \cdot (B + t)  : k \in \bbZ_{2^m - 1},\; t \in \gf{2^m},\; B = \{ 0, 1, \alpha^j, \alpha^l \} \in \calB \right\},
\]
where $\alpha^l =  \alpha^j+1$.
\begin{enumerate}[(i)]
  \item If $m$ is even and $\{j, l\} = \left\{\frac{2^m - 1}{3}, \frac{2(2^m - 1)}{3} \right\}$, then $(\gf{2^m}, \calB^{(j)})$ is a $2$-$(2^m, 4, 1)$ subdesign of $(\gf{2^m}, \calB)$.
  \item Otherwise, $(\gf{2^m}, \calB^{(j)})$ is a $2$-$(2^m, 4, 3)$ subdesign of $(\gf{2^m}, \calB)$.
\end{enumerate}
\end{lemma}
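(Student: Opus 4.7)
My plan is to identify $\calB^{(j)}$ as a single orbit of the block $B=\{0,1,\alpha^j,\alpha^l\}$ under the affine group
\[
G = \{x \mapsto ax+b : a \in \gf{2^m}^*,\; b \in \gf{2^m}\}
\]
of order $2^m(2^m-1)$, and then apply the orbit--stabilizer theorem. Since $G$ acts sharply $2$-transitively on $\gf{2^m}$, every pair of distinct points will lie in the same number $\lambda$ of blocks of $\calB^{(j)}$, so $(\gf{2^m},\calB^{(j)})$ is automatically a $2$-design. Double-counting (pair, block) incidences will then yield $\lambda = 6|\calB^{(j)}|/\binom{2^m}{2}$, reducing the problem to computing $|G_B|$. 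The containment $\calB^{(j)} \subseteq \calB$ will follow because $G$ sends $2$-dimensional affine subspaces of $\gf{2^m}$ to themselves.

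The second step is to compute the setwise stabilizer $G_B$. Because $\alpha^l = \alpha^j + 1$, the block $B$ is the $2$-dimensional $\gf{2}$-subspace spanned by $1$ and $\alpha^j$. I will split into translations and multiplications: a translation $x \mapsto x+b$ preserves $B$ iff $b\in B$, contributing exactly $4$ elements; a scalar $x \mapsto ax$ preserves $B$ iff $a$ lies in the multiplicative stabilizer $M := \{a \in \gf{2^m}^* : aB = B\}$. A short check shows that an arbitrary $g(x)=ax+b$ preserves $B$ iff $aB = B$ and $b\in B$, so $|G_B| = 4|M|$.

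The main obstacle, and the only nontrivial point, is pinning down $|M|$ and identifying when it jumps from $1$ to $3$. I plan to argue as follows. Since $1 \in B$, every $a \in M$ satisfies $a = a\cdot 1 \in B^* = \{1,\alpha^j,\alpha^l\}$, so $M \le B^*$ and $|M|$ divides $3$. At the same time, $|M|$ divides $|\gf{2^m}^*| = 2^m - 1$, which is odd, so $|M| \in \{1,3\}$. If $|M| = 3$, then $M = B^*$ is a subgroup of $\gf{2^m}^*$ of order $3$, and $B = M \cup \{0\}$ is closed under both addition and multiplication, hence is a subfield of $\gf{2^m}$ of order $4$. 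This forces $m$ to be even (so that $\gf{4}$ embeds in $\gf{2^m}$), and then the unique copy of $\gf{4}$ is $\{0,1,\omega,\omega^2\}$ with $\omega = \alpha^{(2^m-1)/3}$, so the condition becomes $\{j,l\} = \{(2^m-1)/3,\, 2(2^m-1)/3\}$. The converse is immediate, since then $B = \gf{4}$ literally.

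Substituting gives the two cases: when $B = \gf{4}$, $|G_B| = 12$ and $\lambda = 6 \cdot \frac{2^m(2^m-1)/12}{2^m(2^m-1)/2} = 1$, yielding case (i); otherwise $|G_B| = 4$ and $\lambda = 3$, yielding case (ii). The $2$-transitivity of $G$ at the start is what guarantees a single $\lambda$, and I expect the multiplicative-stabilizer case analysis to be the only step that requires genuine thought; the rest is routine orbit counting.
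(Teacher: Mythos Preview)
Your proposal is correct and follows essentially the same route as the paper: identify $\calB^{(j)}$ as the $G$-orbit of $B$ under $G\cong\AGL(1,2^m)$, use sharp $2$-transitivity to get a $2$-design, and compute $\lambda$ via orbit--stabilizer. Your stabilizer analysis is in fact more explicit than the paper's (which simply asserts $|\Stab_G(B)|=4$ generically and $12$ in the exceptional case); your factorization $|G_B|=4|M|$ and the subfield characterization of $|M|=3$ are clean. One small wording slip: $B^*=\{1,\alpha^j,\alpha^l\}$ is not a group in general, so write $M\subseteq B^*$ (hence $|M|\le 3$) rather than ``$M\le B^*$ and $|M|$ divides $3$''; the conclusion $|M|\in\{1,3\}$ then follows from $|M|\mid 2^m-1$ as you say.
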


\begin{proof}
The set of all affine transformations of the form $x \mapsto \alpha^k (x + t)$ for $k \in \bbZ_{2^m - 1}$ and $t \in \gf{2^m}$ forms the affine group $G \cong \AGL(1, 2^m)$ acting on $\gf{2^m}$. This group $G$ is sharply $2$-transitive.  
Therefore, the orbit of any $4$-subset $B \subseteq \gf{2^m}$ under $G$ forms a $2$-$(2^m, 4, \lambda)$ design for some $\lambda$.

To compute $\lambda$, it suffices to determine the set-wise stabilizer $\Stab_G(B)$ of the block
  $B = \{ 0, 1, \alpha^j, \alpha^j+1\}$.
This block $B$ itself is an additive subgroup of $\gf{2^m}$ isomorphic to $\bbZ_2^2$, and its stabilizer in $G$ satisfies $\abs{\Stab_G(B)} = 4$ in the generic case.

If $\{j, l\} = \left\{\frac{2^m - 1}{3}, \frac{2(2^m - 1)}{3} \right\}$, which is only possible when $m$ is even, 
then $\{1, \alpha^j, \alpha^l\}$ forms a multiplicative subgroup of order $3$ in $\gf{2^m}^\ast$. In this exceptional case, we have
\[
\Stab_G(B) \cong (B, +) \rtimes \gen{\alpha^{(2^m - 1)/3}},
\]
and  $\abs{\Stab_G(B)} = 4 \times 3 = 12$.

It follows that the number of blocks in $\calB^{(j)}$ is
\[
b = \frac{\abs{G}}{\abs{\Stab_{G}(B)}}
=
\begin{cases}
\frac{2^m (2^m-1)}{12}, &\text{if } \{j, l\} = \left\{\frac{2^m - 1}{3}, \frac{2(2^m - 1)}{3} \right\}, \\
\frac{2^m (2^m-1)}{4}, & \text{otherwise}. 
\end{cases}
\]
Since a $2$-$(2^m, 4, \lambda)$ design has $b = \lambda \cdot \frac{2^m(2^m-1)}{12}$ blocks, 
we obtain $\lambda = 1$ in the exceptional case and $\lambda = 3$ otherwise.

Finally, observe that the sum of all elements in $\alpha^k \cdot (B + t)$ is $0$. 
Hence, $\calB^{(j)} \subseteq \calB$, and the resulting designs are subdesigns of the Boolean $\SQS(2^m)$.
\end{proof}

By considering the orbits of the block set $\calB$ under the action of the affine group $G \cong \AGL(1, 2^m)$, the following result is obtained.

\begin{corollary}\label{cor:bool_decomposition}
Let $(\gf{2^m}, \calB)$ be the Boolean $\SQS(2^m)$.
\begin{enumerate}[(i)]
  \item If $m$ is odd, then $\calB$ can be partitioned as $\calB = \bigcup_{h=1}^{\frac{v - 2}{6}} \calB_h$,
  where $(\gf{2^m}, \calB_h)$ is a $2$-$(2^m,4,3)$ design for each $1 \le h\le \frac{v - 2}{6}$.
  \item If $m$ is even, then $\calB$ can be partitioned as
  $\calB = \calB_0 \cup \bigcup_{h=1}^{\frac{v - 4}{6}} \calB_h$,
  where $(\gf{2^m}, \calB_h)$ is a $2$-$(2^m,4,3)$ design  for each $1 \le h \le \frac{v - 4}{6}$,  and $(\gf{2^m}, \calB_{0})$ is a $2$-$(2^m,4,1)$ design.
\end{enumerate}
\end{corollary}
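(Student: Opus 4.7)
The plan is to let the affine group $G \cong \AGL(1,2^m)$ act on $\gf{2^m}$ and read off the desired partition from the $G$-orbits on the block set $\calB$. Since any affine map $x \mapsto ax + t$ sends a zero-sum quadruple to a zero-sum quadruple (using $a(x+y+z+w) = 0$ and $4t = 0$ in characteristic $2$), the group $G$ preserves $\calB$. Hence $\calB$ decomposes into a disjoint union of $G$-orbits, and each orbit coincides with one of the sets $\calB^{(j)}$ defined in Lemma~\ref{lem:bool_subdesign}.

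Next I would identify the distinct orbits. Since $G$ is sharply $2$-transitive on $\gf{2^m}$, every block can be translated and scaled into the normal form $\{0, 1, \alpha^j, \alpha^l\}$ with $\alpha^l = 1 + \alpha^j$ and $j \in \{1,\dots,2^m - 2\}$. Two such normal forms lie in the same orbit iff they agree as $4$-sets, i.e.\ iff the unordered pairs $\{j, l\}$ coincide; note that $j \mapsto l$ is a well-defined involution on $\{1,\dots,2^m-2\}$ since $\alpha^l = 1+\alpha^j$ implies $\alpha^j = 1 + \alpha^l$, with no fixed points because $\alpha^j = \alpha^l$ would force $1 = 0$. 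By Lemma~\ref{lem:bool_subdesign}, each orbit $\calB^{(j)}$ is a $2$-$(2^m,4,3)$ design, unless $\{j, l\} = \{(2^m-1)/3,\; 2(2^m-1)/3\}$, which requires $3 \mid 2^m - 1$ (i.e.\ $m$ even) and yields a single exceptional $2$-$(2^m,4,1)$ design.

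To count the orbits I would simply enumerate blocks through the fixed pair $\{0, 1\}$. The Boolean $\SQS(2^m)$, regarded as its derived $2$-design, contains $\tfrac{v-2}{2} = 2^{m-1}-1$ blocks through $\{0,1\}$, while a $2$-$(2^m,4,\lambda)$ suborbit contributes exactly $\lambda$ of them. When $m$ is odd, no exceptional orbit exists, so all orbits have $\lambda = 3$, yielding $(2^{m-1}-1)/3 = (v-2)/6$ orbits, which gives part~(i). When $m$ is even, removing the single block from the unique exceptional orbit leaves $(2^{m-1}-2)/3 = (v-4)/6$ orbits of type $2$-$(2^m,4,3)$, which gives part~(ii).

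I expect the main obstacle to be purely bookkeeping: verifying that the involution $j \leftrightarrow l$ has no fixed points, checking that the exceptional orbit (when $m$ is even) is unique, and confirming that the divisibility conditions $3 \mid 2^m-1$ and $3 \nmid 2^m - 1$ align with $v \equiv 4$ and $v \equiv 2 \pmod 6$ respectively. Once Lemma~\ref{lem:bool_subdesign} is in hand, each of these is a one-line check, and the corollary reduces to the orbit count above.
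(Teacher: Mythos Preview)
Your approach is exactly the paper's: decompose $\calB$ into $G$-orbits for $G\cong\AGL(1,2^m)$ and invoke Lemma~\ref{lem:bool_subdesign}. The paper states this in one sentence; you supply the details, and your block-through-$\{0,1\}$ count is a clean way to finish.

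There is, however, a genuine false claim in your middle paragraph. You assert that two normal forms $\{0,1,\alpha^j,\alpha^l\}$ lie in the same $G$-orbit iff the unordered pairs $\{j,l\}$ coincide. The ``only if'' direction is false: in the generic case the orbit is a $2$-$(2^m,4,3)$ design, so it contains exactly \emph{three} blocks through $\{0,1\}$, hence three distinct pairs $\{j,l\}$ represent the same orbit. (For $m=3$ with $\alpha^3=\alpha+1$, the pairs $\{1,3\}$, $\{2,6\}$, $\{4,5\}$ all give the unique orbit; e.g.\ multiplying $\{0,1,\alpha,\alpha^3\}$ by $\alpha^{-1}$ yields $\{0,1,\alpha^2,\alpha^6\}$.) Fortunately this claim is never used: your orbit count in the next paragraph proceeds independently via the $\lambda$-contribution of each orbit and is correct. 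Just delete the faulty sentence (and the involution remark, which is correct but now irrelevant), and the argument stands.
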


\begin{remark}
Barker~\cite{baker1976partitioning} showed that 
the Boolean $\SQS(2^{2s})$ can be partitioned into $2^{2s-1}-1$ distinct $2$-$(2^{2s},4,1)$ designs, each of which is resolvable.  
For even $m=2s$, Corollary~\ref{cor:bool_decomposition}~(ii) can also be derived from Barker's decomposition, although the resulting $2$-designs are not necessarily identical to those obtained from Lemma~\ref{lem:bool_subdesign}.  
It is nontrivial to determine whether Barker's decomposition can be applied to the problem of quasi-uniform pairings.
\end{remark}

Each of the sub-$2$-designs in Lemma~\ref{lem:bool_subdesign} and Corollary~\ref{cor:bool_decomposition} is invariant under the action of the affine group $G \cong \AGL(1,2^m)$, and its block set forms a single $G$-orbit.  
In the following discussion, a $G$-orbit will also be referred to as an \emph{affine orbit}.

\begin{lemma}\label{lem:nested_subdesign}
Let $(\gf{2^m}, \calB)$ be the Boolean $\SQS(2^m)$ and let
$(\gf{2^m}, \calB_h)$ be its $2$-$(2^m,4,3)$ subdesign, where $\calB_h$ is a $G$-orbit. 
Take any block of the form
$B = \{ 0, 1, \alpha^j, \alpha^l \} \in \calB_h$ as a $G$-base block, where $\alpha^l = \alpha^j+1$, and let  
$\widetilde{B} = \{ 0,  1 \mid \alpha^j, \alpha^l \}$ be its corresponding nested block. 
Define
\[
\scrB^{(j)} := \left\{ \alpha^k \cdot (\widetilde{B} + t)  : k \in \bbZ_{2^m - 1},\; t \in \gf{2^m} \right\}.
\]
Then, $(\gf{2^m}, \scrB^{(j)})$ is a completely uniform nested design of $(\gf{2^m}, \calB_h)$.
\end{lemma}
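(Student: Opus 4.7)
The plan has two steps. First, verify that $\scrB^{(j)}$, viewed as a set of nested blocks, has the same cardinality as $\calB_h$, so that the forgetful map (ignoring pair partitions) yields a $G$-equivariant bijection $\scrB^{(j)} \to \calB_h$. Second, use the $2$-transitivity of $G = \AGL(1,2^m)$ on $\gf{2^m}$ to prove via a double count that every pair in $\binom{\gf{2^m}}{2}$ occurs as a nested pair of exactly one element of $\scrB^{(j)}$.

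For step one, recall from the proof of Lemma~\ref{lem:bool_subdesign} that in the $\lambda = 3$ case $\Stab_G(B)$ has order $4$ and consists exactly of the translations $x \mapsto x+b$ for $b \in B = \{0,1,\alpha^j,\alpha^l\}$. I would check that each of these four translations also fixes the nested block $\widetilde{B} = \{0, 1 \mid \alpha^j, \alpha^l\}$: the translations by $0$ and $1$ fix each of the pairs $\{0,1\}$ and $\{\alpha^j, \alpha^l\}$ setwise (using $\alpha^l = \alpha^j + 1$), whereas the translations by $\alpha^j$ and $\alpha^l$ swap the two pairs (using $\alpha^j + \alpha^l = 1$ in characteristic $2$). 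In all four cases the partition of $B$ into pairs is preserved, so $\Stab_G(\widetilde{B}) = \Stab_G(B)$ and hence
\[
|\scrB^{(j)}| = \frac{|G|}{|\Stab_G(\widetilde{B})|} = \frac{2^m(2^m-1)}{4} = |\calB_h|.
\]
Because the forgetful map is $G$-equivariant and surjects onto the single $G$-orbit $\calB_h$, equality of cardinalities makes it a bijection; in particular, $\scrB^{(j)}$ is a nested design of $\calB_h$.

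For step two, $2$-transitivity of $G$ on $\gf{2^m}$ immediately gives transitivity on $\binom{\gf{2^m}}{2}$. For each pair $P$, let $\mu(P)$ denote the number of nested blocks in $\scrB^{(j)}$ in which $P$ is one of the two nested pairs. Since $G$ acts transitively on $\scrB^{(j)}$ by construction and preserves the ``nested pair of'' relation, $\mu$ is $G$-invariant and hence constant on pairs. Counting the incidences in two ways gives
\[
\mu \cdot \binom{2^m}{2} = \sum_{P \in \binom{\gf{2^m}}{2}} \mu(P) = 2\,|\scrB^{(j)}| = \binom{2^m}{2},
\]
so $\mu \equiv 1$. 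Every pair therefore occurs exactly once as a nested pair, which establishes complete uniformity.

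The main obstacle I expect is the stabilizer verification in step one: if even one translation by an element of $B$ failed to preserve the pair partition of $\widetilde{B}$, then $\scrB^{(j)}$ would be strictly larger than $\calB_h$ and the forgetful map would cease to be a bijection. The pairing $\{0,1\}$ together with $\{\alpha^j,\alpha^l\}$ is not arbitrary; it is aligned with the additive $\bbZ_2^2$-structure of the affine plane $B \subseteq \gf{2^m}$ so that the characteristic-$2$ identity $\alpha^j + \alpha^l = 1$ forces translations by $\alpha^j$ and $\alpha^l$ to act as the nontrivial pair-swap rather than breaking the pairing into fresh pairs.
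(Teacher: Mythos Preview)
Your proposal is correct and follows essentially the same approach as the paper. The paper also verifies that the additive translations by elements of $B$ stabilize $\widetilde{B}$ (checking only the two generators $1$ and $\alpha^j$, which suffices) to get $|\scrB^{(j)}|=|\calB_h|$, and then uses the $2$-transitivity of $G$ on $\gf{2^m}$ for uniformity; the only cosmetic difference is that the paper phrases the second step as ``the $G$-orbit of $\{0,1\}$ is the trivial $2$-$(2^m,2,1)$ design'' rather than your explicit double count.
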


\begin{proof}
The argument is analogous to that of Lemma~\ref{lem:bool_subdesign}.  
By construction, $(\gf{2^m}, \scrB^{(j)})$ is a nested design of $(\gf{2^m}, \calB_h)$.  

The nested base block $\widetilde{B} = \{ 0, 1 \mid \alpha^j, \alpha^l \}$ is invariant under 
$x \mapsto x + 1$ and $x \mapsto x + \alpha^j$, since
\begin{align*}
\widetilde{B} + 1&= \{ 1, 0 \mid \alpha^l, \alpha^j \} = \widetilde{B}, \\
\widetilde{B} + \alpha^j &= \{ \alpha^j, \alpha^l \mid 0, 1 \} = \widetilde{B}.
\end{align*}
Thus the stabilizer of $\widetilde{B}$ is isomorphic to $\bbZ_2^2$, and the $G$-orbit $\scrB^{(j)}$ of $\widetilde{B}$ has size
\[
\abs{\scrB^{(j)}} = \frac{\abs{G}}{4}= 2^{m-2}(2^m-1).
\]
This matches the number of blocks in a $2$-$(2^m,4,3)$ design, confirming that $(\gf{2^m}, \scrB^{(j)})$ is indeed a nested design of $(\gf{2^m}, \calB_h)$. 

For the multiplicity of nested pairs, note that the $G$-orbit of the pair $\{0, 1\}$ in $\widetilde{B}$ 
is precisely the set of all $\binom{2^m}{2}$ pairs, that is, the trivial $2$-$(2^m, 2, 1)$ design. 
Consequently, each pair occurs exactly once in $\scrB^{(j)}$, 
and the resulting nested design is completely uniform.
\end{proof}

\begin{theorem}\label{thm:main_boolean_2_des}
For any integer $m \ge 3$, there exists a completely uniform nested $2$-$(2^m,4,3)$ design.
\end{theorem}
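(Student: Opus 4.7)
The plan is to realize the desired nested $2$-$(2^m,4,3)$ design as an affine-orbit subdesign of the Boolean $\SQS(2^m)$, using the machinery already assembled. Since Lemma~\ref{lem:bool_subdesign} gives a $2$-$(2^m,4,3)$ subdesign $(\gf{2^m}, \calB^{(j)})$ for every choice of base block $B = \{0, 1, \alpha^j, \alpha^l\}$ with $\{j,l\}$ not equal to the exceptional pair, and Lemma~\ref{lem:nested_subdesign} then automatically provides a completely uniform nested design $(\gf{2^m}, \scrB^{(j)})$ of that subdesign, the task reduces to showing that at least one legitimate $j$ exists for every $m \ge 3$.

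First, I would fix a primitive element $\alpha$ of $\gf{2^m}$ and range over $j \in \{1, 2, \dots, 2^m - 2\}$, noting that for each such $j$ the element $\alpha^j + 1$ is nonzero and distinct from $1$, so it equals $\alpha^l$ for a unique $l$ in the same range. This gives a nonempty collection of candidate base blocks of the form $\{0, 1, \alpha^j, \alpha^l\}$.

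Next I would split by parity of $m$. When $m$ is odd, $3 \nmid 2^m - 1$, so the exceptional pair in Lemma~\ref{lem:bool_subdesign}(i) does not exist and every such $j$ produces a $2$-$(2^m,4,3)$ subdesign. When $m$ is even, the exceptional pair corresponds to the two nontrivial cube roots of unity in $\gf{2^m}^\ast$; ruling these out forbids at most two values of $j$, leaving at least $(2^m - 2) - 2 = 2^m - 4 \ge 12 > 0$ admissible choices for $m \ge 4$. Either way, a valid $j$ exists.

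Fixing such a $j$, Lemma~\ref{lem:bool_subdesign}(ii) declares $(\gf{2^m}, \calB^{(j)})$ a $2$-$(2^m,4,3)$ subdesign of the Boolean $\SQS(2^m)$, and Lemma~\ref{lem:nested_subdesign}, applied to the nested base block $\widetilde{B} = \{0, 1 \mid \alpha^j, \alpha^l\}$, produces the $G$-orbit $\scrB^{(j)}$ and certifies that $(\gf{2^m}, \scrB^{(j)})$ is a completely uniform nested design of $(\gf{2^m}, \calB^{(j)})$. This yields the required object for every $m \ge 3$. There is essentially no obstacle beyond verifying the case analysis above; the heavy lifting was done in Lemmas~\ref{lem:bool_subdesign} and~\ref{lem:nested_subdesign}, and the theorem is a direct packaging of their conclusions.
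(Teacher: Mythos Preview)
Your proposal is correct and follows the same approach as the paper: the paper's proof is the single sentence ``This is a direct consequence of Lemma~\ref{lem:nested_subdesign},'' and your argument simply unpacks that consequence by explicitly checking that a non-exceptional index $j$ exists for every $m \ge 3$. The parity case-split you give is sound and makes the dependence on Lemma~\ref{lem:bool_subdesign} explicit, but there is no substantive difference in strategy.
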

\begin{proof}
This is a direct consequence of Lemma~\ref{lem:nested_subdesign}.
\end{proof}

\begin{remark}
Since a whist tournament of order $2^m$ with $m \ge 3$ is equivalent to a resolvable completely uniform nested $2$-$(2^m,4,3)$ design,  
Moore's work~\cite{Moore1896a,Moore1896b} on whist tournaments already implies Theorem~\ref{thm:main_boolean_2_des}.  
Therefore, this result itself is not new. 
\end{remark}

\begin{theorem}\label{thm:main_boolean}
For any integer $m \ge 3$, there exists a nested $\SQS(2^m)$ derived from the Boolean $\SQS(2^m)$, which is completely uniform when $m$ is odd and completely quasi-uniform when $m$ is even.
\end{theorem}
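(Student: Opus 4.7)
The plan is to assemble the pieces already developed in the paper, so the proof will amount to a clean case analysis based on the parity of $m$. First I would record the arithmetic observation that $2^m \equiv 2 \pmod{6}$ when $m$ is odd and $2^m \equiv 4 \pmod{6}$ when $m$ is even, so that the two cases line up precisely with the hypotheses of Propositions~\ref{prop:split_2designs_2} and~\ref{prop:split_2designs_4}.

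For odd $m$, I would invoke Corollary~\ref{cor:bool_decomposition}(i) to partition $\calB$ into $(2^m-2)/6$ affine orbits, each of which forms a $2$-$(2^m,4,3)$ subdesign. Lemma~\ref{lem:nested_subdesign} then supplies, for every such orbit, an explicit completely uniform nested pairing obtained by applying the $\AGL(1,2^m)$-action to a nested base block $\widetilde{B}=\{0,1\mid \alpha^j,\alpha^l\}$ with $\alpha^l=\alpha^j+1$. Taking the union of these nested subdesigns, Proposition~\ref{prop:split_2designs_2} immediately yields a completely uniform nested $\SQS(2^m)$ in which each pair has multiplicity $(2^m-2)/6$, matching the count demanded by Theorem~\ref{thm:all_uniform}.

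For even $m$, I would use Corollary~\ref{cor:bool_decomposition}(ii) instead: $\calB$ decomposes into one exceptional $2$-$(2^m,4,1)$ affine orbit $\calB_0$ together with $(2^m-4)/6$ orbits $\calB_1,\dots,\calB_{(2^m-4)/6}$, each a $2$-$(2^m,4,3)$ subdesign. Lemma~\ref{lem:nested_subdesign} again provides a completely uniform nested pairing of every $\calB_h$ with $h \ge 1$, while on $\calB_0$ one partitions the blocks into pairs arbitrarily (its blocks are disjoint pairs up to the design structure, so any such pairing is automatically a uniform but not completely uniform nested $2$-$(2^m,4,1)$ design, as noted in the proof of Proposition~\ref{prop:split_2designs_4}). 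Plugging these nested designs into Proposition~\ref{prop:split_2designs_4} gives a nested $\SQS(2^m)$ whose pair multiplicities are exactly $(2^m-4)/6$ or $(2^m+2)/6$, agreeing with Theorem~\ref{thm:all_quasi_uniform} and certifying complete quasi-uniformity.

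There is no genuine obstacle left at this stage; the construction is essentially a bookkeeping synthesis. The only point worth stating carefully is that all nested pairings produced above are pairings of the same underlying Boolean $\SQS(2^m)$, so that their union is literally a nested $\SQS(2^m)$ rather than a mere multiset; this is automatic because the decomposition in Corollary~\ref{cor:bool_decomposition} is a disjoint partition of $\calB$ and each nested pairing refines its own part without touching the others.
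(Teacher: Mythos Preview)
Your proposal is correct and follows essentially the same route as the paper's own proof: decompose the Boolean $\SQS(2^m)$ via Corollary~\ref{cor:bool_decomposition}, apply Lemma~\ref{lem:nested_subdesign} to each $2$-$(2^m,4,3)$ piece, and conclude with Propositions~\ref{prop:split_2designs_2} and~\ref{prop:split_2designs_4}. Your added remarks on the congruence of $2^m$ modulo~$6$ and on the disjointness of the partition are accurate and harmless elaborations, but the argument is otherwise identical to the paper's.
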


\begin{proof}
First, decompose the Boolean $\SQS(2^m)$ into $2$-$(2^m,4,3)$ subdesigns, together with an additional $2$-$(2^m,4,1)$ subdesign if and only if $m$ is even, as described in Corollary~\ref{cor:bool_decomposition}.  
For each $2$-$(2^m,4,3)$ subdesign, Lemma~\ref{lem:nested_subdesign} ensures the existence of a corresponding completely uniform nested $2$-$(2^m,4,3)$ design.  
The proof is then completed by applying Propositions~\ref{prop:split_2designs_2} and~\ref{prop:split_2designs_4}.
\end{proof}

To clarify the constructions of Lemma~\ref{lem:nested_subdesign} and Theorem~\ref{thm:main_boolean}, three small examples are presented below.

\begin{example}\label{ex:8_bool}
Let $m=3$ and consider the finite field $\gf{8}$ with primitive element $\alpha$, defined by the primitive polynomial $f(x) = x^3 + x + 1 \in \gf{2}[x]$.
In this case, the Boolean $\SQS(8)$ is also a $2$-$(8, 4, 3)$ design, consisting of a single affine orbit. 
Take $B= \{0, 1, \alpha, \alpha^3\}$ as a base block, where $\alpha^3 = \alpha + 1$. 
Following Lemma~\ref{lem:nested_subdesign}, define the base nested block $\widetilde{B}= \{0, 1 \mid \alpha, \alpha^3\}$
and set
\[
\scrB := \left\{ \alpha^k \cdot (\widetilde{B} + t)  : k \in \bbZ_{7},\; t \in \gf{8} \right\}.
\]
This can be partitioned as
\[
\scrB := \left\{ \alpha^k \cdot \widetilde{B}   : k \in \bbZ_{7} \right\} 
\cup \left\{ \alpha^k \cdot (\widetilde{B}+\alpha^2)   : k \in \bbZ_{7} \right\}, 
\]
where 
$\widetilde{B}+\alpha^2 = \{\alpha^2, \alpha^6 \mid \alpha^4, \alpha^5\}$.
Here, $\widetilde{B} = \widetilde{B}+t$ holds for $t \in \{ 1, \alpha, \alpha^3\}$, and
 $\widetilde{B}+\alpha^2 = \widetilde{B}+t$  holds for $t \in \{ \alpha^4, \alpha^5, \alpha^6\}$. 
Notably, $\{\widetilde{B}, \widetilde{B}+\alpha^2\}$ forms a parallel class. 
By Lemma~\ref{lem:nested_subdesign}, $(\gf{8}, \scrB)$ is a completely uniform nested $\SQS(8)$ in which each nested pair has multiplicity $1$. 
Its rotational form was given previously in Example~\ref{ex:8}.
\end{example}

\begin{example}\label{ex:16_bool}
Let $m=4$ and consider the finite field $\gf{16}$ with primitive element $\alpha$, defined by the primitive polynomial $f(x) = x^4 + x + 1$.
The Boolean $\SQS(16)$ is also a $2$-$(16,4,7)$ design, which can be decomposed into two $2$-$(16,4,3)$ designs and one $2$-$(16,4,1)$ design.
Take $\widetilde{B}_1 = \{0, 1 \mid \alpha, \alpha^4\}$ and $\widetilde{B}_2 = \{0, 1 \mid \alpha^2, \alpha^8\}$ as base nested blocks.  
Under the action of the affine group, the base blocks corresponding to $\widetilde{B}_1$ and $\widetilde{B}_2$ generate the associated $2$-$(16,4,3)$ designs. 
Constructing $\scrB^{(1)}$ and $\scrB^{(2)}$ according to Lemma~\ref{lem:nested_subdesign} yields
\begin{align*}
\scrB^{(1)} &= \left\{ \alpha^k \cdot \widetilde{B}_i^{(1)}   : k \in \bbZ_{15}, \; i \in \{0,2,3,6\} \right\},\\
\scrB^{(2)} &= \left\{ \alpha^k \cdot \widetilde{B}_i^{(2)}   : k \in \bbZ_{15}, \; i \in \{0,1,3,7\} \right\},
\end{align*}
where $\widetilde{B}_i^{(j)} := \widetilde{B}_j + \alpha^i$ for $j \in \{1,2\}$.  
Explicitly,
\begin{align*}
\widetilde{B}_0^{(1)} &= \{0, 1 \mid \alpha, \alpha^4 \}, \\
\widetilde{B}_2^{(1)} &=\{ \alpha^2, \alpha^8 \mid \alpha^5, \alpha^{10} \}, \\
\widetilde{B}_3^{(1)} &=\{ \alpha^3, \alpha^{14} \mid \alpha^9, \alpha^7 \}, \\
\widetilde{B}_6^{(1)} &=\{ \alpha^6, \alpha^{13} \mid \alpha^{11}, \alpha^{12} \}, \\[4pt]
\widetilde{B}_0^{(2)} &= \{0, 1 \mid \alpha^2, \alpha^8 \}, \\
\widetilde{B}_1^{(2)} &=\{ \alpha, \alpha^4 \mid \alpha^5, \alpha^{10} \}, \\
\widetilde{B}_3^{(2)} &=\{ \alpha^3, \alpha^{14} \mid \alpha^6, \alpha^{13}\}, \\
\widetilde{B}_7^{(2)} &=\{ \alpha^7, \alpha^{9} \mid \alpha^{12}, \alpha^{11} \}.
\end{align*}
Both $\left\{\widetilde{B}_i^{(1)}\right\}$ and  $\left\{\widetilde{B}_i^{(2)}\right\}$ form parallel classes.  
For the base block $B_5 = \{0, 1, \alpha^5, \alpha^{10}\}$, the affine group action generates the associated $2$-$(16,4,1)$ design.  
Its nested pairs can be partitioned arbitrarily, and the resulting nested design is denoted by $\scrB^{(5)}$.
By Lemma~\ref{lem:nested_subdesign}, $(\gf{16}, \scrB^{(1)} \cup \scrB^{(2)} \cup \scrB^{(5)})$ is a completely quasi-uniform nested $\SQS(16)$ in which each nested pair has multiplicity $2$ or $3$.  
A rotational form of this nested $\SQS(16)$ (with the $2$-$(16,4,1)$ subdesign arranged in a symmetric manner) was given previously in Example~\ref{ex:16}.
\end{example}

\begin{example}\label{ex:32_bool}
Let $m=5$ and consider the finite field $\gf{32}$ with primitive element $\alpha$, defined by the primitive polynomial $f(x) = x^5 + x^2 + 1$.
The Boolean $\SQS(32)$ is also a $2$-$(32, 4, 15)$ design, which can be decomposed into five $2$-$(32, 4, 3)$ designs.
For each sub-2-design, the base nested blocks can be chosen as follows: 
\begin{align*}
\widetilde{B}_1 &=\{ 0, 1 \mid \alpha, \alpha^{18} \}, \\
\widetilde{B}_2 &=\{ 0, 1 \mid \alpha^2, \alpha^{5} \}, \\
\widetilde{B}_4 &=\{ 0, 1 \mid \alpha^4, \alpha^{10} \}, \\
\widetilde{B}_8 &=\{ 0, 1 \mid \alpha^8, \alpha^{20} \}, \\
\widetilde{B}_{16} &= \{ 0, 1 \mid \alpha^{16}, \alpha^9 \}.
\end{align*}
Notably, for each $0 \le i \le 3$, every element of $\widetilde{B}_{2^{i+1}}$ is the square of the corresponding element of $\widetilde{B}_{2^i}$.  
This reflects the $2$-cyclotomic class structure of $\gf{32}$, namely the orbits of $\gf{32}^\ast$ under the Frobenius map $x \mapsto x^2$.
By Lemma~\ref{lem:nested_subdesign}, using these five base nested blocks yields a completely uniform nested $\SQS(32)$ in which each nested pair has multiplicity $5$.  
\end{example}

\section{Applications of completely uniform nested $2$-$(2^m,4,3)$ designs to fractional repetition codes with zero skip
cost }
\label{sect:applications}

This section shows that a completely uniform nested $2$-$(2^m,4,3)$ design yields a fractional repetition (FR) code with locality $2$ and skip cost $0$ for $v=2^m$ packets. This gives an FR code using significantly fewer storage nodes than SQS-based constructions of the same order (see Theorem~\ref{thm:main_FR_2_des}).

FR codes were introduced by El Rouayheb and Ramchandran~\cite{el2010fractional} 
as a combinatorial tool for distributed storage systems.  
They are used as the inner component of DRESS (Distributed Replication based Exact Simple Storage) codes~\cite{pawar2011dress}, 
which combine an outer MDS code with an inner FR code.  
In an FR code, symbols are replicated across storage nodes in a structured manner.  
A failed node can be repaired by transferring symbols from other surviving nodes, called helper nodes, without additional computation.  
This structure enables simple and efficient repair in large-scale storage systems, which also balances the I/O and bandwidth load among helper nodes. 

The formal definition is given below, with notation adjusted from storage coding theory to align with standard usage in combinatorial design theory.

\begin{definition}[FR codes~\cite{el2010fractional}]
A $(b,k,r)$ \emph{fractional repetition} (FR) code $\calC^{\mathrm{FR}}$ is a collection of $b$ subsets 
$B_1, B_2, \ldots, B_b$ of $V:=\{1,2,\ldots,v\}$, 
with $bk = vr$, such that $\abs{B_i} = k$ for each $1 \le i \le b$, 
and each symbol of $V$ belongs to exactly $r$ subsets in $\calC^{\mathrm{FR}}$.  
\end{definition}

In distributed storage systems based on DRESS codes, a file $\vct{x}$ is encoded into $v$ packets $\vct{x}_1, \vct{x}_2, \ldots, \vct{x}_v$ and stored across $b$ storage nodes according to a $(b,k,r)$ FR code $\calC^{\mathrm{FR}}$.  
Node $i$ stores the $k$ packets indexed by the elements of $B_i \in \calC^{\mathrm{FR}}$, that is, $\{ \vct{x}_j : j \in B_i\}$. 
An FR code is said to have \emph{locality} $d$ if any failed node can be repaired by contacting exactly $d$ helper nodes.

In real systems, data access time is characterized not only by the amount of data read (known as I/O costs), but also by the number of contiguous sections accessed on a disk~\cite{wu2021achievable}.  
To capture this factor, Chee et al.~\cite{chee2024repairing} proposed a new metric, called \emph{skip cost}.  
When skip cost is considered, the order of packets within each node becomes crucial.  
Thus, in the following, each node is represented by an ordered $k$-tuple of packet indices.  
The underlying set system (the FR code) is unchanged, where only the order of packets is fixed to account for skip cost.

\begin{definition}[Skip cost~\cite{chee2024repairing}]
Consider a helper node containing packets indexed by $B = (b_1, b_2, \ldots, b_k)$, 
and suppose that $t$ packets are read, say
\[
R = \left\{b_{i_1}, b_{i_2}, \ldots, b_{i_t}\right\}, \qquad i_1 < i_2 < \cdots < i_t.
\]
The \emph{skip cost} of reading $R$ with respect to $B$ is defined as
\[
\operatorname{cost}(R,B) := i_t - i_1 - (t-1).
\]
If the indices in $R$ are consecutive, then the skip cost equals zero.

When multiple helper nodes are used, the total skip cost is the sum of the costs computed in each helper node.
\end{definition}

In terms of combinatorics, an FR code is a $k$-uniform, $r$-regular hypergraph (set system) on $V=\{1,2,\ldots,v\}$ with $b$ edges (blocks).  
In particular, any $2$-$(v,k,\lambda)$ design is a $(b,k,r)$ FR code, where $r = \lambda \frac{v-1}{k-1}$ is the replication number and $b = \lambda \frac{v(v-1)}{k(k-1)}$ is the number of blocks. 

The key property used below is complete uniformity for nested designs, which guarantees, for any failed node $(b_1,b_2,b_3,b_4)$, the existence of two helper nodes (nested blocks) that contain $\{b_2,b_3\}$ and $\{b_1,b_4\}$ as nested pairs (which corresponds to consecutive packets), respectively. 

\begin{figure}[t!]
\centering
\footnotesize
\begin{tabular}{|c|c|c|c|c|c|c|c|c|c|c|c|c|c|}
\hline
$B_1$ & $B_2$ & $B_3$ & $B_4$ & $B_5$ & $B_6$ & $B_7$ &
$B_8$ & $B_9$ & $B_{10}$ & $B_{11}$ & $B_{12}$ & $B_{13}$ & $B_{14}$ \\
\hline
\bbox{\infty} & $\infty$ & $\infty$ & \bbox{\infty} & $\infty$ & $\infty$ & $\infty$ &
$2$ & $3$ & $4$ & $5$ & $6$ & $0$ & $1$ \\
\bbox{0} & $1$ & $2$ & \bbox{3} & $4$ & $5$ & $6$ &
$6$ & $0$ & $1$ & $2$ & $3$ & $4$ & $5$ \\
\bbox{1} & $2$ & $3$ & $4$ & $5$ & $6$ & $0$ &
$4$ & $5$ & $6$ & \bbox{0} & $1$ & $2$ & $3$ \\
\bbox{3} & $4$ & $5$ & $6$ & $0$ & $1$ & $2$ &
$5$ & $6$ & $0$ & \bbox{1} & $2$ & $3$ & $4$ \\
\hline
\end{tabular}
\caption{A $(14, 4, 7)$ FR code with locality $2$ and skip cost $0$ constructed from the completely uniform nested $\SQS(8)$ in Example~\ref{ex:8}.}
\label{fig:fr_sqs8_good}
\end{figure}

An FR code can be represented in array form, where each block $B_i$ is placed in a column corresponding to a storage node.  
Fig.~\ref{fig:fr_sqs8_good} shows a $(14,4,7)$ FR code constructed using the completely uniform nested $\SQS(8)$ given in Example~\ref{ex:8}, in which each nested pair is placed consecutively. 
 (The original labeling $\{\infty, 0, 1, \ldots, 6\}$ is used instead of $\{1,2, \ldots, 8\}$ for clarity.)
This code has locality $2$ with zero skip cost, that is, for any single failed node, two helper nodes are sufficient such that all reads are contiguous.
For instance, when $B_1$ containing $\vct{x}_{\infty}, \vct{x}_0, \vct{x}_1, \vct{x}_3$ fails, $B_4$ and $B_{11}$ can serve as helper nodes such that the skip cost is zero (the corresponding packets are highlighted by boxes in Fig.~\ref{fig:fr_sqs8_good}).

\begin{figure}[t!]
\centering
\footnotesize
\begin{tabular}{|c|c|c|c|c|c|c|c|c|c|c|c|c|c|}
\hline
$B_1$ & $B_2$ & $B_3$ & $B_4$ & $B_5$ & $B_6$ & $B_7$ &
$B_8$ & $B_9$ & $B_{10}$ & $B_{11}$ & $B_{12}$ & $B_{13}$ & $B_{14}$ \\
\hline
$1$ & $1$ & $1$ & $1$ & $1$ & $1$ & $1$ & $5$ & $3$ & $3$ & \bbox{2} & $7$ & $3$ & $3$ \\
\bbox{2} & $2$ & $2$ & $3$ & \bbox{6} & $4$ & $4$ & $6$ & $4$ & $4$ & \bbox{4} & $4$ & $2$ & $2$ \\
$3$ & $5$ & $7$ & $5$ & $3$ & $5$ & $7$ & $7$ & $7$ & $5$ & \bbox{6} & $5$ & $7$ & $5$ \\
\bbox{4} & $6$ & $8$ & $7$ & \bbox{8} & $8$ & $6$ & $8$ & $8$ & $6$ & \bbox{8} & $2$ & $6$ & $8$ \\
\hline
\end{tabular}
\caption{A $(14, 4, 7)$ FR code with locality $2$ and skip cost $2$ constructed from $\SQS(8)$
 (reproduced from \cite[Fig.~2(a)]{chee2024repairing}).}
\label{fig:fr_sqs8_bad}
\end{figure}

For comparison, Fig.~\ref{fig:fr_sqs8_bad} shows an FR code with the same parameters, also constructed from $\SQS(8)$, but with skip cost $2$. For example, suppose the node $B_{11}$ containing $\vct{x}_2, \vct{x}_4, \vct{x}_6, \vct{x}_8$ fails. The packets $\vct{x}_2, \vct{x}_4$ in $B_1$ and $\vct{x}_6, \vct{x}_8$ in $B_5$ are accessed for repair. Since $\vct{x}_2$ and $\vct{x}_4$ in $B_1$ are not contiguous, the gap contributes a skip cost of one. In total, a skip cost of two is required to recover $\vct{x}_2, \vct{x}_4, \vct{x}_6, \vct{x}_8$ from the two helper nodes $B_1$ and $B_5$.

In general, a completely uniform nested $\SQS(v)$ yields a $(b_{\text{SQS}},4,r_{\text{SQS}})$ FR code with locality $2$ and skip cost $0$,  
where $r_{\text{SQS}} = \frac{(v-1)(v-2)}{6}$, $b_{\text{SQS}} = \frac{v(v-1)(v-2)}{24}$, and $v \ge 8$.

However, the $3$-design requirement for SQSs is stronger than necessary for locality $2$ and skip cost $0$. A completely uniform nested $2$-$(v,4,3)$ design already suffices and yields $r_{2\text{-}(v,4,3)}=v-1$ and $b_{2\text{-}(v,4,3)}=\frac{v(v-1)}{4}$. Compared to the SQS-based FR codes, the node count ratio is
\begin{equation*}
\frac{b_{2\text{-}(v,4,3)}}{b_{\text{SQS}}}
=\frac{{v(v-1)}/{4}}{{v(v-1)(v-2)}/{24}}
=\frac{6}{v-2},
\end{equation*}
which is strictly smaller than $1$ for $v>8$ (and equal to $1$ at $v=8$). Here, $v$ denotes the number of packets.

By Theorem~\ref{thm:main_boolean_2_des}, which establishes the existence of a completely uniform nested $2$-$(2^m,4,3)$ design for any $m \ge 3$, 
the following consequence is immediate. 

\begin{theorem}\label{thm:main_FR_2_des}
For any integer $m \ge 3$, there exists a $(b,4,r)$ FR code for $v=2^m$ packets with locality $2$ and skip cost $0$, where  
\[
b = \frac{v(v-1)}{4} = 2^{m-2} (2^m-1), \qquad r = v-1=2^m-1.
\]
\end{theorem}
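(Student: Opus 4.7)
The plan is to treat Theorem~\ref{thm:main_FR_2_des} as an almost immediate consequence of Theorem~\ref{thm:main_boolean_2_des}: start from a completely uniform $2$-$(2^m,4,3)$ nested design $(\gf{2^m}, \scrB)$, and convert each nested block $\{a,b \mid c,d\}\in\scrB$ into an ordered storage node $(a,b,c,d)$ whose two nested pairs occupy the positions $(1,2)$ and $(3,4)$. This automatically gives a $k$-uniform, $r$-regular set system with $b=\abs{\scrB}$ ordered nodes, that is, a $(b,4,r)$ FR code.

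The parameters follow from standard $2$-design counting applied to $v = 2^m$: a $2$-$(v,4,3)$ design has $b=\tfrac{v(v-1)}{4}$ blocks and replication number $r=3\cdot\tfrac{v-1}{3}=v-1$, matching the claimed values. So the only nontrivial step is to verify locality $2$ with skip cost $0$.

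For this, suppose a node with packet indices $B = \{a,b,c,d\}$ and nested partition $\{a,b \mid c,d\}$ fails. The strategy is to repair using the alternative pair-partition $\{a,c\}\mid\{b,d\}$ of $B$. By complete uniformity, every pair in $\binom{\gf{2^m}}{2}$ occurs as a nested pair in $\scrB$, and the total count $2\abs{\scrB}=\tfrac{v(v-1)}{2}=\binom{v}{2}$ forces each pair to occur exactly once. Hence $\{a,c\}$ and $\{b,d\}$ each appear as a nested pair of a unique block. Neither block can be $B$ itself, since the nested pairs of $B$ are $\{a,b\}$ and $\{c,d\}$; and the two blocks must be distinct, because otherwise that common block would contain $\{a,c\}\cup\{b,d\} = B$ as its underlying $4$-set, again forcing it to equal $B$. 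Taking these two blocks as helper nodes, the packets needed to rebuild $B$ sit consecutively in each helper, giving locality $2$ with zero skip cost.

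No serious obstacle arises; the statement is essentially a bookkeeping consequence of Theorem~\ref{thm:main_boolean_2_des}. The only substantive point is the uniqueness-and-distinctness observation in the previous paragraph, which uses the full strength of \emph{complete} uniformity (multiplicity exactly one for every pair), not merely uniformity.
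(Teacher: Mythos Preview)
Your proof is correct and mirrors the paper's approach: invoke Theorem~\ref{thm:main_boolean_2_des}, place nested pairs in consecutive positions, and repair a failed node via an alternative pair-partition of its four packets (the paper uses $\{b_2,b_3\},\{b_1,b_4\}$ instead of your $\{b_1,b_3\},\{b_2,b_4\}$, an immaterial difference). Your distinctness check for the two helpers is actually more detailed than the paper's one-line proof; note only that the step ``again forcing it to equal $B$'' tacitly uses simplicity of the underlying $2$-design, which holds here because that design is a subdesign of the Boolean SQS.
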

\begin{proof}
Complete uniformity guarantees, for any failed block $(b_1,b_2,b_3,b_4)$, two helper blocks that contain the pairs $\{b_2,b_3\}$ and $\{b_1,b_4\}$ as consecutive positions, yielding locality $2$ and skip cost $0$. Substituting $\lambda=3$ and $k=4$ into $r=\lambda\frac{v-1}{k-1}$ and $b=\lambda\frac{v(v-1)}{k(k-1)}$ gives $r=v-1$ and $b=\frac{v(v-1)}{4}$.
\end{proof}

Therefore, generalizing completely uniform nested SQSs to $2$-$(v,4,\lambda)$ designs with $\lambda \equiv 0 \pmod{3}$ provides FR codes with locality $2$ and zero skip cost while significantly reducing the number of storage nodes for $v>8$.

\section{More results for small non-Boolean orders}
\label{sect:more_res}

This section provides concrete examples of completely (quasi-)uniform nested $\SQS(v)$ for small non-Boolean values of $v$, most of which were previously unknown, although a few appeared in earlier works under different names.

For $v=10$, the smallest nontrivial order admitting a completely quasi-uniform nested $\SQS(v)$, 
the $\SQS(10)$ is unique up to isomorphism and can be represented as a cyclic SQS on $\bbZ_{10}$ with base blocks 
$\{0,1,5,9\}$, $\{0,2,5,8\}$, and $\{0,1,3,4\}$.  
However, a completely quasi-uniform nested $\SQS(10)$ cannot be obtained simply by partitioning the base blocks and applying cyclic translations.  
For nested pairing, it is more convenient to consider the point set $\bbZ_3 \times \bbZ_3 \cup \{\infty\}$.

\begin{table}[thbp]
\centering
\small
\caption{Base blocks of completely quasi-uniform nested $\SQS(10)$ over $\bbZ_3 \times \bbZ_3 \cup \{\infty\}$ and their translation rules.}
\label{tab:SQS10-partitions}
\begin{tabular}{c|l|l}
\hline
No. & Base nested blocks & Translate by \\
\hline
$\widetilde{B}_1$  & $\{\infty,(0,0) \mid (1,0),(2,0)\}$ & $(\ast,\bbZ_3)$ \\
$\widetilde{B}_2$  & $\{\infty,(0,0) \mid (0,1),(0,2)\}$ & $(\bbZ_3,\ast)$ \\
$\widetilde{B}_3$  & $\{\infty,(0,1) \mid (2,0),(1,2)\}$ & $(\bbZ_3,\ast)$ \\
$\widetilde{B}_4$  & $\{\infty,(0,2) \mid (2,0),(1,1)\}$ & $(\bbZ_3,\ast)$ \\
$\widetilde{B}_5$  & $\{(0,0),(2,2) \mid (1,0),(1,2)\}$ & $(\ast,\bbZ_3)$ \\
$\widetilde{B}_6$  & $\{(0,0),(0,1) \mid (1,0),(2,1)\}$ & $(\ast,\bbZ_3)$ \\
$\widetilde{B}_7$  & $\{(0,0),(1,1) \mid (2,0),(2,1)\}$ & $(\ast,\bbZ_3)$ \\
$\widetilde{B}_8$  & $\{(0,0),(1,2) \mid (0,1),(1,1)\}$ & $(\bbZ_3,\ast)$ \\
$\widetilde{B}_9$  & $\{(0,0),(2,1) \mid (0,2),(2,2)\}$ & $(\bbZ_3,\ast)$ \\
$\widetilde{B}_{10}$ & $\{(0,0),(2,0) \mid (0,1),(2,2)\}$ & $(\bbZ_3,\ast)$ \\
\hline
\end{tabular}
\end{table}

\begin{example}[Completely quasi-uniform nested $\SQS(10)$]\label{ex:cqu-SQS10}
Let $V=\bbZ_3 \times \bbZ_3  \cup  \{\infty\}$.  
For each base nested block $\widetilde{B}_i$ in Table~\ref{tab:SQS10-partitions}, define a semi-cyclic orbit
\[
\scrO_i :=\left\{ \widetilde{B}_i + (a, b) : (a, b) \in T_i \right\},
\]
where $T_i = \{(0,t) : t\in\bbZ_3\}$ if the ``Translate by'' column is $(\ast,\bbZ_3)$, and 
$T_i=\{(t,0) : t\in\bbZ_3\}$ if it is $(\bbZ_3,\ast)$. 
Here, $\infty + (a, b) := \infty$ for all $(a, b) \in \bbZ_3 \times \bbZ_3$. 
For instance, for $\widetilde{B}_1=\{\infty,(0,0)\mid(1,0),(2,0)\}$ with $(\ast,\bbZ_3)$
and $\widetilde{B}_2=\{\infty,(0,0) \mid (0,1),(0,2)\}$ with $(\bbZ_3,\ast)$, the corresponding orbits are respectively  
\begin{align*}
\scrO_1 &=\bigl\{ \{\infty,(0,t)\mid(1,t),(2,t)\} : t\in\bbZ_3 \bigr\},\\
\scrO_2 &=\bigl\{ \{\infty,(t,0) \mid (t,1),(t,2)\} : t\in\bbZ_3 \bigr\}.
\end{align*}
Let $\scrB=\bigcup_{i=1}^{10}\scrO_i$ and then $(V, \scrB)$ is a completely quasi-uniform $\SQS(10)$. 
There are exactly $15$ nested pairs occur with multiplicity $2$ and the remaining $30$ nested pairs occur with multiplicity $1$.
\end{example}

The smallest non-Boolean order admitting a completely uniform nested $\SQS(v)$ is $v=14$.  
There exist four non-isomorphic $\SQS(14)$~\cite{MendelsohnHung1972}.  
The following example is based on the unique $\SQS(14)$ with the largest automorphism group, namely the $\SQS(14)$ whose automorphism group is $\AGL(1,7) \cong \bbZ_7 \rtimes \bbZ_7^\ast$.  

\begin{table}[thbp]
\centering
\small
\caption{Base blocks of completely uniform nested $\SQS(14)$ over $\bbZ_7 \times \{0, 1\}$.}
\label{tab:SQS14-partitions}
\begin{tabular}{c|l}
\hline
No. & Base nested blocks \\
\hline
$\widetilde{B}_{1}$  & $\{(0,0), (1,0) \mid (3,1), (4,1)\}$ \\
$\widetilde{B}_{2}$  & $\{(0,0), (1,0) \mid (4,0), (2,0)\}$ \\
$\widetilde{B}_{3}$  & $\{(0,0), (2,0) \mid (2,1), (0,1)\}$ \\
$\widetilde{B}_{4}$  & $\{(0,0), (4,0) \mid (6,1), (3,1)\}$ \\
$\widetilde{B}_{5}$  & $\{(0,0), (0,1) \mid (4,0), (4,1)\}$ \\
$\widetilde{B}_{6}$  & $\{(0,0), (1,1) \mid (0,1), (1,0)\}$ \\
$\widetilde{B}_{7}$  & $\{(0,0), (1,1) \mid (6,1), (2,0)\}$ \\
$\widetilde{B}_{8}$ & $\{(0,0), (2,1) \mid (4,1), (1,1)\}$ \\
$\widetilde{B}_{9}$  & $\{(0,0), (2,1) \mid (1,0), (5,0)\}$ \\
$\widetilde{B}_{10}$  & $\{(0,0), (3,1) \mid (5,1), (2,0)\}$ \\
$\widetilde{B}_{11}$ & $\{(0,0), (5,1) \mid (2,1), (4,0)\}$ \\
$\widetilde{B}_{12}$ & $\{(0,0), (6,1) \mid (5,1), (1,0)\}$ \\
$\widetilde{B}_{13}$ & $\{(0,1), (6,1) \mid (5,1), (3,1)\}$ \\
\hline
\end{tabular}
\end{table}

\begin{example}[Completely uniform nested $\SQS(14)$]\label{ex:cu-SQS14}
Let $V=\bbZ_7 \times \{0,1\}$.  
For each base nested block $\widetilde{B}_i$ in Table~\ref{tab:SQS14-partitions}, 
define its cyclic orbit by
\[
\scrO_i := \left\{  \widetilde{B}_i + (t,\ast) : t \in \bbZ_7 \right\},
\]
where $(x,y)+(t,\ast):=(x+t,y)$ for any $x, t \in \bbZ_7$. 
Let $\scrB=\bigcup_{i=1}^{13}\scrO_i$.  
Then $(V,\scrB)$ is a completely uniform nested $\SQS(14)$.  
\end{example}

\begin{remark}
In the framework of ``nested BIBDs'' (cf. Definition~\ref{def:preece}), Morgan, Preece, and Rees~\cite{MorganPreeceRees2001} presented three examples of completely uniform nested $\SQS(14)$, listed as No.~57 (labeled Ca1, Ca2, Ca3) in Table~1 of~\cite{MorganPreeceRees2001}.  
Quite recently, Rosin~\cite{rosin2025using} proposed a heuristic method, assisted by Large Language Models (LLMs), for searching combinatorial design structures. Through this approach, a completely uniform nested $\SQS(14)$ was discovered.
\end{remark}

The next example is based on a rotational $\SQS(44)$ constructed by a classical method of Carmichael~\cite{carmichael1937groups} for constructing an $\SQS(q+1)$ using the action of the projective special linear group $\PSL(2,q)$ on the projective line $\gf{q} \cup \{\infty\}$, 
where $q$ is a prime power satisfying $q \equiv 7 \pmod{12}$.  

It is known~\cite{hartman1980resolvable} that Carmichael's $\SQS(q+1)$ contains the stabilizer of $\infty$ in $\PSL(2,q)$ 
as a subgroup of its automorphism group.  
This stabilizer is
\[
\Gamma \cong (\gf{q},+) \rtimes \langle \alpha^2 \rangle,
\]
and consists of the maps $\gamma: x \mapsto \alpha^{2k}x + c$ for $0 \le k < (q-1)/2$ and $c \in \gf{q}$, 
where $\alpha$ is a primitive element of $\gf{q}$.  

\begin{example}[Completely uniform nested $\SQS(44)$]\label{ex:cu-SQS44}
Let $q = 43$. Consider the primitive element $\alpha = 3$. 
By acting $\PSL(2, q)$ on $\{\infty, 0, 1, 37\}$,  
a rotational $\SQS(44)$ is obtained.  
In this case, there are five $\Gamma$-orbits, and the corresponding $\Gamma$-base blocks can be chosen as follows:
\[
\begin{aligned}
B_\infty &= \{\infty, 1, 36, 6\}, \\
B_0 &= \{0, 3, 22, 18\}, \\
B_1 &= \{1, 8, 14, 29\}, \\
B_2 &= \{15, 22, 25, 39\}, \\
B_3 &= \{11, 33, 35, 40\}.
\end{aligned}
\]
Using $B_\infty$ and $B_1$, corresponding rotational base nested  blocks can be constructed as follows: 
\begin{align*}
\scrB_\infty
   &=\Bigl\{
       \{\infty, 1  \mid 36,  6\},
       \{\infty, 4  \mid 15, 24\},
       \{\infty, 11 \mid  9, 23\},\\
   &\qquad
       \{\infty, 16 \mid 17, 10\},
       \{\infty, 21 \mid 25, 40\},
       \{\infty, 35 \mid 13, 38\},
       \{\infty, 41 \mid 14, 31\}
     \Bigr\},
\\
\scrB_0
   &=\Bigl\{
       \{0,  3 \mid 22, 18\},
       \{0, 12 \mid  2, 29\},
       \{0, 33 \mid 27, 26\}, \\
   &\qquad
       \{0,  5 \mid  8, 30\},
       \{0, 20 \mid 32, 34\},
       \{0, 19 \mid 39, 28\},
       \{0, 37 \mid 42,  7\}
     \Bigr\}.
\end{align*}
Here, the first nested block in $\scrB_\infty$ (resp., $\scrB_0$) corresponds to a nested block of $B_\infty$ (resp., $B_0$), say $\widetilde{B}_\infty$ (resp., $\widetilde{B}_0$).
The remaining nested blocks are obtained by multiplying $\widetilde{B}_\infty$ (resp., $\widetilde{B}_0$) by suitable quadratic residues in $\gf{q}$.  
For $i \in \{1, 2, 3\}$, let $\widetilde{B}_i$ denote a nested block of $B_i$, obtained by partitioning $B_i$ into unordered pairs in an arbitrary way. Define 
\begin{align*}
\scrB_i &= \left\{ \alpha^{2k} \widetilde{B}_i : 0 \le k < (q-1)/2 \right\},
\quad i \in \{1, 2, 3\},
\end{align*}
where $\alpha$ is a primitive element of $\gf{q}$.   
Then $\bigcup_{i \in \{\infty,0,1,2,3\}} \scrB_i$ forms the set of rotational base nested blocks.  
Finally, apply translations over $\gf{q}$ to the rotational base nested blocks by defining
\[
\scrB := \bigcup_{i \in \{\infty,0,1,2,3\}} \; \bigcup_{\widetilde{B} \in \scrB_i} 
          \left\{\widetilde{B} + t : t \in \gf{q} \right\}.
\]
Then $(V,\scrB)$ is a completely uniform nested $\SQS(44)$, in which each pair occurs with multiplicity $7$.
\end{example}

In fact, the construction in Example~\ref{ex:cu-SQS44} for $\SQS(44)$ can be generalized to all prime powers $q \equiv 7 \pmod{12}$.  
However, the details are quite involved and require techniques that are not available in the Boolean case.  
As this paper is intended to focus mainly on the Boolean case, a full proof of the generalization is beyond its scope and will be presented in a forthcoming work.

\begin{table}[t]
\centering
\small
\caption{Base blocks of completely uniform nested $\SQS(50)$ over $\bbZ_{49} \cup \{\infty\}$.}
\label{tab:SQS50-partitions}
\begin{tabular}{llll}
\hline
$\{\infty,0 \mid 1,3\}$ & $\{\infty,0 \mid 4,19\}$ & $\{\infty,0 \mid 5,18\}$ & $\{\infty,0 \mid 6,23\}$ \\
$\{\infty,0 \mid 7,21\}$ & $\{\infty,0 \mid 8,24\}$ & $\{\infty,0 \mid 9,20\}$ & $\{\infty,0 \mid 10,22\}$ \\
$\{0,1 \mid 7,8\}$ & $\{0,1 \mid 14,15\}$ & $\{0,1 \mid 28,29\}$ & $\{0,2 \mid 7,9\}$ \\
$\{0,2 \mid 14,16\}$ & $\{0,2 \mid 26,22\}$ & $\{0,2 \mid 28,30\}$ & $\{0,3 \mid 6,12\}$ \\
$\{0,4 \mid 3,2\}$ & $\{0,5 \mid 2,43\}$ & $\{0,6 \mid 39,36\}$ & $\{0,7 \mid 13,37\}$ \\
$\{0,7 \mid 17,36\}$ & $\{0,8 \mid 48,4\}$ & $\{0,9 \mid 42,27\}$ & $\{0,11 \mid 16,24\}$ \\
$\{0,11 \mid 29,34\}$ & $\{0,11 \mid 41,22\}$ & $\{0,11 \mid 45,37\}$ & $\{0,12 \mid 9,43\}$ \\
$\{0,13 \mid 25,15\}$ & $\{0,14 \mid 27,20\}$ & $\{0,15 \mid 11,48\}$ & $\{0,15 \mid 36,10\}$ \\
$\{0,16 \mid 9,35\}$ & $\{0,16 \mid 30,21\}$ & $\{0,17 \mid 34,5\}$ & $\{0,18 \mid 7,19\}$ \\
$\{0,18 \mid 24,9\}$ & $\{0,18 \mid 43,27\}$ & $\{0,20 \mid 41,35\}$ & $\{0,21 \mid 26,12\}$ \\
$\{0,21 \mid 27,13\}$ & $\{0,21 \mid 36,45\}$ & $\{0,22 \mid 7,24\}$ & $\{0,22 \mid 37,12\}$ \\
$\{0,22 \mid 48,18\}$ & $\{0,23 \mid 7,20\}$ & $\{0,23 \mid 17,46\}$ & $\{0,23 \mid 47,22\}$ \\
$\{0,25 \mid 29,48\}$ & $\{0,26 \mid 11,18\}$ & $\{0,27 \mid 32,8\}$ & $\{0,27 \mid 33,3\}$ \\
$\{0,27 \mid 46,22\}$ & $\{0,28 \mid 14,21\}$ & $\{0,28 \mid 18,12\}$ & $\{0,28 \mid 45,10\}$ \\
$\{0,29 \mid 9,26\}$ & $\{0,29 \mid 17,24\}$ & $\{0,29 \mid 19,2\}$ & $\{0,29 \mid 40,23\}$ \\
$\{0,30 \mid 17,39\}$ & $\{0,30 \mid 29,47\}$ & $\{0,30 \mid 36,40\}$ & $\{0,31 \mid 23,11\}$ \\
$\{0,31 \mid 48,33\}$ & $\{0,32 \mid 18,42\}$ & $\{0,32 \mid 28,33\}$ & $\{0,34 \mid 46,15\}$ \\
$\{0,35 \mid 11,4\}$ & $\{0,35 \mid 24,45\}$ & $\{0,36 \mid 18,34\}$ & $\{0,36 \mid 28,38\}$ \\
$\{0,36 \mid 33,23\}$ & $\{0,37 \mid 5,29\}$ & $\{0,37 \mid 28,34\}$ & $\{0,38 \mid 25,16\}$ \\
$\{0,39 \mid 1,41\}$ & $\{0,39 \mid 2,38\}$ & $\{0,39 \mid 8,34\}$ & $\{0,39 \mid 10,23\}$ \\
$\{0,39 \mid 14,40\}$ & $\{0,40 \mid 32,21\}$ & $\{0,41 \mid 8,46\}$ & $\{0,41 \mid 21,37\}$ \\
$\{0,41 \mid 27,7\}$ & $\{0,42 \mid 23,37\}$ & $\{0,43 \mid 14,45\}$ & $\{0,43 \mid 32,41\}$ \\
$\{0,43 \mid 48,39\}$ & $\{0,44 \mid 6,27\}$ & $\{0,44 \mid 14,41\}$ & $\{0,44 \mid 39,31\}$ \\
$\{0,44 \mid 43,47\}$ & $\{0,45 \mid 22,38\}$ & $\{0,45 \mid 30,18\}$ & $\{0,45 \mid 32,16\}$ \\
$\{0,46 \mid 7,4\}$ & $\{0,46 \mid 14,11\}$ & $\{0,46 \mid 28,25\}$ & $\{0,48 \mid 32,36\}$ \\
\hline
\end{tabular}
\end{table}

\begin{example}[Completely uniform nested $\SQS(50)$]\label{ex:cu-SQS50}
Let $V = \bbZ_{49} \cup \{\infty\}$.  
Define $\scrB$ by translating each of the $100$ base nested blocks listed in Table~\ref{tab:SQS50-partitions} 
under the action of the cyclic group $\bbZ_{49}$, fixing $\infty$.  
Then $(V,\scrB)$ is a completely uniform nested $\SQS(50)$.  

The underlying $\SQS(50)$ is rotational, obtained by the recursive construction of Ji and Zhu~\cite{ji2002improved}, 
with the rotational $\SQS(8)$ having base blocks $\{\infty,0,1,3\}$ and $\{2,6,4,5\}$ as the ingredient design (cf.~Example~\ref{ex:8}).  
The pair partitions required for the nested blocks were obtained by computer search.
\end{example}

\begin{table}[t]
\centering
\small
\caption{Existence of completely uniform nested $\SQS(v)$ for $8 \le v \leq 50$}\label{tab:cu_SQS_le_50}
\begin{tabular}{c|l|l}
\hline
$v$ & Existence and references & Remarks \\
\hline
 8  & Exists (Theorem~\ref{thm:main_boolean}, Example~\ref{ex:8_bool}) & Boolean, rotational \\
14  & Exists (Example~\ref{ex:cu-SQS14}) &  Semi-cyclic \\
20  & Exists (\cite[Example~4.6]{chee2024pairs}) & Rotational \\
26  & Exists (\cite[Example~4.7]{chee2024pairs}) & Rotational \\
32  & Exists (Theorem~\ref{thm:main_boolean}, Example~\ref{ex:32_bool}) & Boolean, rotational \\
38  & Exists (\cite[Example~4.8]{chee2024pairs}) & Rotational \\
44  & Exists (Example~\ref{ex:cu-SQS44}) & Rotational \\
50  & Exists (Example~\ref{ex:cu-SQS50}) & Rotational \\
\hline
\end{tabular}
\end{table}

Table~\ref{tab:cu_SQS_le_50} summarizes the existence of completely uniform nested $\SQS(v)$ for $8 \le v \le 50$ with $v \equiv 2 \pmod{6}$, all of which have been established. Most results in this range arise from rotational SQSs, including the Boolean cases. 
The next unresolved case is $v=56$.  
Because the existence of a rotational $\SQS(56)$ is undetermined, this case is especially challenging and may be difficult to approach even with computer search.  

In contrast, for completely quasi-uniform nested $\SQS(v)$ with $v \equiv 4 \pmod{6}$, 
the situation remains largely unresolved.  
Apart from the Boolean cases and the smallest nontrivial case $\SQS(10)$ given in Example~\ref{ex:cqu-SQS10}, no further examples are currently known. 

\section{Concluding remarks}
\label{sect:conclude}

This study proposes explicit constructions for completely (quasi-)uniform nested SQSs based on Boolean $\SQS(2^m)$, resolving two open problems from~\cite{chee2024pairs} by constructing infinite families of such designs for odd $m \ge 3$ (completely uniform) and even $m \ge 4$ (completely quasi-uniform). 

The constructions presented in this study rely heavily on the rich algebraic and geometric structure of the Boolean SQSs, 
in particular their affine invariance ($2$-transitivity), rotational automorphisms, and resolvable properties.  

From the perspective of transitivity, it is well known that the automorphism group of the Boolean $\SQS(2^m)$ 
(or equivalently, the affine geometry $\AG(m,2)$) is the affine general linear group 
$\AGL(m,2) \simeq \gf{2^m} \rtimes \GL(m,2)$, which is $3$-transitive (see \cite[Section~5.2]{lindner1978steiner}).  
However, 
in our construction, the subgroup $\AGL(1,2^m)$, which is sharply 2-transitive, plays the key role.  
In general, the full automorphism group of an SQS cannot be applied directly to its nested design, 
as it may fail to preserve the nested pairs within each nested block.  This suggests that an appropriate subgroup of the full automorphism group is often essential for constructing completely uniform nested pairings.  

A potential direction for future research is to generalize the ideas used in Sections~\ref{sect:bool_construction} and \ref{sect:more_res} to broader classes of SQSs, 
such as those with large automorphism groups or resolvable rotational SQSs.  
Related studies on $2$-designs, such as cyclically resolvable rotational $2$-designs~\cite{anderson1993cyclically,jimbo1984recursive}, cyclically resolvable cyclic $2$-designs~\cite{genma1997cyclic,lam1999cyclically,mishima1997some,mishima2000recursive,sarmiento2000resolutions}, and resolvable difference families~\cite{buratti1997resolvable}, received considerable attention in design theory from the 1980s to the 2000s, though the general case remains far from settled even for $2$-designs.  
However, related study for $3$-designs has been less developed, with few infinite families known. 
Sawa~\cite{sawa2007cyclic} explored cyclically resolvable cyclic $3$-$(v, 4, \lambda)$ designs with $\lambda \equiv 0 \pmod{3}$ and proved their existence for all admissible $v$, but the case of $\lambda = 1$ (Steiner quadruple systems) remains quite challenging.  
The resolvable rotational case for $3$-designs is largely unexplored. 
The open problems concerning nested SQSs motivate further investigation into the existence and construction of resolvable $3$-designs admitting cyclic, rotational, or, more generally, point-regular automorphism groups.

Moreover, as discussed in Section~\ref{sect:applications}, 
completely uniform nested $2$-$(v,4,3)$ designs are also of independent interest from the perspective of applications.  
Their existence and construction problems appear closely related to the study of cyclically resolvable rotational $2$-designs, 
which is itself an interesting topic in combinatorial design theory.

\section*{Acknowledgements}
The authors would like to thank the anonymous reviewers for their constructive comments, 
which clarified the significance of this study and improved the overall structure of the paper.  

This study was partly supported by JSPS KAKENHI Grant Nos.~22K13949, 22K11943, and 24K14819.

\bibliographystyle{abbrv}
\bibliography{bibBSQS}

\end{document}